\documentclass[a4paper,10pt]{article}
\usepackage[utf8x]{inputenc}
\usepackage{amsmath,amsthm,amssymb}

\def\MR#1{}

\makeatletter
\let\@fnsymbol\@arabic
\makeatother 

\newtheorem{thm}{Theorem}
\newtheorem{lem}[thm]{Lemma}
\newtheorem{prop}[thm]{Proposition}
\newtheorem{cor}[thm]{Corollary}
\newtheorem{claim}[thm]{Claim}

\theoremstyle{definition}
\newtheorem{definition}{Definition}

\newcommand{\FF}{\mathbb{F}}
\newcommand{\ZZ}{\mathbb{Z}}
\newcommand{\cH}{\mathcal{H}}
\newcommand{\cHLM}{\mathcal{H}^{LM}}
\newcommand{\EE}{\mathbb{E}}
\newcommand{\var}{\mathrm{Var}}
\newcommand{\Bi}{\mathrm{Bi}}

\newcommand{\formalconnected}{homologically connected}
\newcommand{\connected}{hom-connected}
\newcommand{\formalconnectivity}{homological connectivity}
\newcommand{\connectivity}{hom-connectivity}

\newcommand{\eps}{\varepsilon}

\title{Homological connectivity of random hypergraphs}
\author{Oliver Cooley\thanks{Graz University of Technology, Institute of Discrete Mathematics, 8010 Graz, Austria. email: {\tt \{cooley,kang,spruessel\}@math.tugraz.at}. Supported by Austrian Science Fund (FWF): P27290 and W1230 II}, Penny Haxell\thanks{University of Waterloo, Waterloo ON, Canada N2L 3G1. email: {\tt pehaxell@uwaterloo.ca}, Partially supported by NSERC}, Mihyun Kang\textsuperscript{1}, and Philipp Spr\"ussel\textsuperscript{1}}

\begin{document}

\maketitle

\setcounter{footnote}{2}

\begin{abstract}
We consider simplicial complexes that are generated from the binomial
random $3$-uniform hypergraph by taking the downward-closure. We determine
when this simplicial complex is homologically connected, meaning that its
zero-th and first homology groups with coefficients in $\FF_2$ vanish.
Although this is not intrinsically a monotone property, we show that it nevertheless has a single sharp threshold, and indeed prove a hitting time result relating the connectedness to the disappearance of the last minimal obstruction.
\end{abstract}

\section{Introduction}

A classical result of Erd\H{o}s and R\'enyi~\cite{ErdosRenyi59} states that
the random graph $G(n,p)$ becomes connected with high probability when $p$
is approximately $\frac{\log n}{n}$. Bollob\'as and Thomason~\cite{BollobasThomason85} subsequently proved
a hitting time result: With high probability the random graph \emph{process},
in which edges are added one at a time in random order, becomes connected at
exactly the moment when the last isolated vertex disappears. The aim of this
paper is to prove a similar result for two-dimensional simplicial complexes.

While the random graph $G(n,p)$ is defined in a canonical way -- fix the
vertex set $[n]:=\{1,2,\ldots,n\}$ and let each pair of vertices be connected
by an edge with probability $p$ independently -- there are different natural
models of random two-dimensional simplicial complexes. The following model
was introduced by Linial and Meshulam~\cite{LinialMeshulam06}: Start
with the full one-dimensional skeleton on the vertex set $[n]$ and let each
triple of vertices form a $2$-simplex with probability $p$ independently.
For such a complex $X$, they consider the first homology group $H_1(X;\FF_2)$
with coefficients in $\FF_2$ and prove that the vanishing of this homology
group has a sharp threshold at $p=\frac{2\log n}{n}$.

In this paper, we consider two-dimensional complexes that arise from the binomial
random $3$-uniform hypergraph, in which each triple forms an edge with
probability $p$ independently, by taking the downward-closure.
Either model might be considered natural. Linial and Meshulam construct their
complexes ``bottom up''; thus, all $1$-simplices have to be added to avoid
restricting the possible set of $2$-simplices. On the other hand, our complexes are
constructed ``top down'' in the sense that we first choose the $2$-simplices and then
take only those $1$-simplices needed to make the resulting structure a valid simplicial
complex. However, we keep all of the $0$-simplices (vertices) even if they are
not contained in any $1$- or $2$-simplices since they were integral to the initial
construction of the random hypergraph. (Deleting isolated vertices would leave a hypergraph
which is \emph{not} distributed as the binomial random hypergraph.)
We discuss some further models in Section~\ref{sec:conclusion}.

Unlike the complexes defined by Linial and Meshulam, a simplicial complex
generated by a $3$-uniform hypergraph does not have to be topologically
connected. Therefore, we shall call a complex $X$ \emph{\formalconnected} if \emph{both}
its first and its zero-th homology group with coefficients in $\FF_2$ vanish. This notion of connectivity
will turn out \emph{not} to be monotone increasing---adding simplices to a
\formalconnected\ complex might yield a complex that is not \formalconnected.
Nevertheless, we will show that \formalconnectivity\ has a single sharp threshold.

\subsection{Definitions and model}

A family $X$ of non-empty finite subsets of a set $V$ is called a
\emph{simplicial complex} if it is downward-closed, i.e.\ if every non-empty set
$A$ that is contained in a set $B\in X$ also lies in $X$.
The elements of $X$ of size $k+1$ are called the \emph{$k$-simplices}
of $X$. If a complex has at least one $k$-simplex but no $(k+1)$-simplices,
then we call the complex \emph{$k$-dimensional} or a \emph{$k$-complex}.
In a slight abuse of terminology, we also use $k$-complex to refer to a
simplicial complex with dimension smaller than $k$, i.e.\ with no
$k$-simplices.

Every $3$-uniform hypergraph $H=(V,E)$ generates a $2$-complex $\cH$ by
taking the downward-closure of $E$.
More particularly, we will generate the hypergraph $H$, and thus also the corresponding complex, randomly.
Let $H^3(n,p)$ denote the random $3$-uniform hypergraph on the vertex set $[n]:=\{1,2,\ldots,n\}$ in which every
triple of vertices forms an edge with probability $p$ independently. This is the random \emph{binomial} model,
but we will also need to consider the random \emph{uniform} model $H^3(n,m)$, the random $3$-uniform
hypergraph on vertex set $[n]$ which has its edge set $E$ chosen uniformly at random from $\binom{\binom{[n]}{3}}{m}$.
We denote the corresponding random $2$-complexes by $\cH_p(n)$ and $\cH_m(n)$, respectively.
When the number of vertices is clear from the context, we will often omit $n$ in these notations.
By $\cHLM_p$ and $\cHLM_m$ we denote the random $2$-complex obtained from
$\cH_p$ and $\cH_m$ respectively by adding all $1$-simplices in $\binom{[n]}{2}$.
Thus, $\cHLM_p$ is the random $2$-complex that was considered by
Linial and Meshulam~\cite{LinialMeshulam06}.

We will usually consider the
complex instead of the hypergraph and refer to its 2-simplices as
\emph{faces}, to its 1-simplices as \emph{edges}, and to its
0-simplices as \emph{vertices}. The set of vertices and edges forms the
\emph{shadow graph}.

\begin{definition}
  A simplicial complex $X$ is called \emph{\formalconnected}, abbreviated to
  \emph{\connected}, if its zero-th and first homology groups with coefficients in $\FF_2$, denoted by $H_0(X;\FF_2)$ and
  $H_1(X;\FF_2)$, vanish. The zero-th homology group vanishing is equivalent
  to being topologically connected, i.e.\ the shadow graph is connected. By the
  equality of simplicial homology and cohomology, the first homology group
  vanishes if and only if the first cohomology group does, i.e.\ if every
  $1$-cocycle is a $1$-coboundary, which can be stated in the following way.
  
  \medskip
  \noindent
  \begin{minipage}{\linewidth}
    \hfill
    \textbf{(H1)}
    \hfill
    \begin{minipage}[c]{0.85\linewidth}
      For every 0-1 function $f_e$ on the edges of $X$ that has an even
      number of 1s on the boundary of each face of $X$ there is a 0-1
      function $f_v$ on the vertices of $X$ such that $f_e$ is 1 for
      precisely those edges whose end vertices have different values
      for $f_v$.
    \end{minipage}
    \hfill{}
  \end{minipage}
  \medskip

  We call a 0-1 function on the edges of $X$ \emph{bad} if it
  contradicts property (H1), i.e., if it is even on the boundary
  of every face but is not induced by a 0-1 function on the vertices.
  This is the case if and only if the shadow graph has a cycle whose
  edge-values sum to an odd number.

The \emph{support} of a 0-1 function is the set of edges mapped to $1$.
\end{definition}

Topological connectivity in a complex generated from a $3$-uniform hypergraph
is equivalent to vertex-connectivity of the
hypergraph. The requirement for the complex to be topologically
connected did not appear in~\cite{LinialMeshulam06} since in $\cHLM_p$ all
edges are automatically present, so topological connectivity is trivial.

We further note that in contrast to the model of Linial and Meshulam, in our model
the property of being \connected\ is \emph{not} a monotone
increasing property---this is because the shadow graph is not
automatically complete, and by adding a new face we may create a
new cycle which is part of a bad 0-1 function.

It is therefore not obvious that \connectivity\ should have a threshold in $\cH_p$
or indeed that it does not exhibit several thresholds where it oscillates
between being connected and disconnected. However, our results
in this paper prove that there is in fact a single threshold.

\subsection{Main results}

Linial and Meshulam~\cite{LinialMeshulam06} proved that \connectivity\ of the random complex $\cHLM_p$ undergoes
a phase transition at $p= \frac{2\log n}{n}$; our first
main result proves the analogous result in $\cH_p$.
We will consider the asymptotic properties of
$\cH_p(n)$ as $n$ tends to infinity, hence any unspecified asymptotics
in the paper are with respect to $n$.
We say that an event holds \emph{with high probability}, abbreviated to
\emph{whp}, if it holds with probability tending to $1$ as $n$ tends to infinity.

\begin{thm}\label{thm:binomialthreshold}
Let $\omega$ be any function of $n$ which tends to infinity as $n$ tends to infinity.
Then with high probability
\begin{itemize}
\item $\cH_p(n)$ is not \formalconnected\ if $p=\frac{\log n+\frac12 \log \log n-\omega}{n}$;
\item $\cH_p(n)$ is \formalconnected\ if $p=\frac{\log n+\frac12 \log \log n+\omega}{n}$.
\end{itemize}
\end{thm}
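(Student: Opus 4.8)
The plan is to determine the threshold by identifying the ``bottleneck'' obstructions to being \connected{} and showing they vanish at exactly $p=\frac{\log n+\frac12\log\log n}{n}$. There are two kinds of obstruction. The first is a failure of topological connectivity, i.e.\ the shadow graph of $\cH_p$ is disconnected. Since the shadow graph has an edge $\{u,v\}$ precisely when some triple $\{u,v,w\}$ is present, a vertex $v$ is isolated in the shadow graph iff no triple containing $v$ is an edge, which happens with probability $(1-p)^{\binom{n-1}{2}}$. A first-moment computation shows that for $p=\frac{\log n+\frac12\log\log n+\omega}{n}$ the expected number of shadow-isolated vertices tends to $0$ (using $\binom{n-1}{2}p \sim \tfrac{n}{2}(\log n+\tfrac12\log\log n)$, so $(1-p)^{\binom{n-1}{2}}\approx n^{-1/2}(\log n)^{-1/4}e^{-\omega/2}/\sqrt{\cdot}$, and $n$ times this tends to $0$), while for $p=\frac{\log n+\frac12\log\log n-\omega}{n}$ it tends to infinity, and a second-moment argument shows such a vertex exists whp. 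One must also rule out slightly larger disconnected components (a ``component'' of $k$ vertices spanning no edges to the rest requires no triple leaving it, probability roughly $(1-p)^{k\binom{n-k}{2}}$, dominated for small $k$ and negligible for larger $k$ by the usual union bound), so isolated shadow-vertices are the unique bottleneck on the $0$-th homology side, and this already appears slightly \emph{above} the Linial--Meshulam threshold $\frac{2\log n}{n}$ would suggest---in fact it is \emph{lower}, which is why the $\frac12\log\log n$ term, not $\log n$, appears.

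The second, and genuinely harder, kind of obstruction is a bad $0$-$1$ function witnessing $H_1(\cH_p;\FF_2)\ne 0$; by property (H1) and the remark following it, such a function exists iff there is a cocycle that is not a coboundary. The strategy I would use is the standard cocycle-counting / minimal-obstruction method adapted from Linial--Meshulam, but carried out relative to our model. Fix a candidate support $S\subseteq\binom{[n]}{2}$ that is ``even'' on every face of $\cH_p$ but not a cut; by a minimality argument one may assume $S$ is an inclusion-minimal bad support, and then the relevant extremal configuration is a single vertex $v$ together with some set of edges at $v$ forming the support (a ``star-like'' obstruction), reflecting that the cheapest way to make a cocycle fail to be a coboundary is to isolate a single vertex's worth of edge-values. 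One computes: given a vertex $v$ and a subset $T$ of its potential neighbours, the probability that the indicator of the edges from $v$ to $T$ is a (non-coboundary) cocycle is the probability that for every $w\notin\{v\}$ the triple-constraints are satisfied, which again reduces to the event that $v$ has no edge to $[n]\setminus(T\cup\{v\})$ of a certain parity type---and after summing over $T$ the dominant term is exactly the $(1-p)^{\binom{n-1}{2}}$-type contribution of a shadow-isolated vertex, i.e.\ the $H_1$ obstruction does not push the threshold any higher than the $H_0$ obstruction. The bulk of the work is the union bound over all \emph{non}-star-like minimal bad supports: one stratifies by the number of vertices and edges they touch and shows, via the expansion properties of $\cH_p$ at this density (every small vertex set spans many faces, so ``even on every face'' is very restrictive), that their total expected number is $o(1)$ for $p=\frac{\log n+\frac12\log\log n+\omega}{n}$.

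For the $1$-statement (the complex \emph{is} \connected{}) I would therefore: (a) show whp the shadow graph is connected, by the first-moment bound on shadow-isolated vertices plus the small-component union bound above; and (b) show whp there is no bad $0$-$1$ function, by the minimal-obstruction union bound, isolating the single-vertex star terms and checking those also contribute $o(1)$ at this $p$. For the $0$-statement (\emph{not} \connected{}) it suffices to exhibit one obstruction: whp there is a shadow-isolated vertex (second moment), which already makes $H_0\ne 0$. One subtlety worth flagging, given the paper's emphasis that \connectivity{} is not monotone, is that for the sharp-threshold/hitting-time machinery one cannot simply invoke monotonicity; but for Theorem~\ref{thm:binomialthreshold} as stated we only need the two one-sided whp statements at the two specified values of $p$, so non-monotonicity is not an obstacle here---it only becomes one for the hitting-time refinement promised in the abstract.

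I expect the main obstacle to be step (b): controlling the non-star-like minimal bad supports. The delicate point is that, unlike in $\cHLM_p$ where the full $1$-skeleton is present and a bad support lives on a genuine cycle of the complete graph, here the shadow graph is sparse and a bad support must be a cut of the \emph{shadow} graph that is not a cut of a coarser partition---so one must carefully set up the right notion of ``minimal obstruction'' (e.g.\ minimal with respect to the symmetric-difference-with-a-coboundary equivalence, chosen so that the support cannot be reduced by flipping a single vertex), and then prove the corresponding expansion/exponential-decay estimate uniformly over the number of vertices spanned, all the way from a constant up to $\Theta(n)$, which is where the computation is most technical and where a naive union bound is not quite strong enough near the threshold.
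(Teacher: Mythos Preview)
Your proposal rests on a misidentification of the bottleneck obstruction and a miscalculation. You claim that a shadow-isolated vertex survives until $p=\frac{\log n+\frac12\log\log n}{n}$, computing $\binom{n-1}{2}p\sim\frac{n}{2}(\log n+\frac12\log\log n)$ and then asserting $(1-p)^{\binom{n-1}{2}}\approx n^{-1/2}(\log n)^{-1/4}e^{-\omega/2}$. But $e^{-\frac{n}{2}\log n}=n^{-n/2}$, not $n^{-1/2}$: you have silently dropped a factor of $n$ in the exponent. In fact isolated vertices in the shadow graph disappear already at $p=(2+o(1))\frac{\log n}{n^2}$, orders of magnitude below the threshold in question, and topological connectivity holds throughout the relevant range. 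Consequently your $0$-statement argument (``exhibit a shadow-isolated vertex by second moment'') proves nothing at $p=\frac{\log n+\frac12\log\log n-\omega}{n}$, and your explanation of the $\frac12\log\log n$ term is wrong: it has nothing to do with $H_0$.

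The correct bottleneck is an $H_1$ obstruction that the paper calls $M$: a face $abc$ whose edges $ab,ac$ lie in no other face, together with an $ab$-path avoiding $ab,ac$. The bad function supported on $\{ab,ac\}$ is even on every face (only $abc$ meets it, in two edges) but is odd on the cycle formed by the path and $ab$. The expected number of such rooted configurations is $\binom{n}{3}\cdot 3\cdot p(1-p)^{2(n-2)}\sim\frac{n^3p}{2}e^{-2pn}$, and setting this to a constant yields precisely $p=\frac{\log n+\frac12\log\log n+O(1)}{n}$; this is the source of the second-order term. The $0$-statement is then a second-moment argument for $M$ (not for isolated vertices). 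For the $1$-statement, one must separately dispose of copies of $M$ (first moment), of bad supports of size $3\le k\le\log n$ (union bound using that a minimal bad support is \emph{super-connected}, i.e.\ linked through shared faces, which forces $k-1$ specific faces to be present), and of supports of size $k\ge\log n$ (a breadth-first search enumeration combined with the Linial--Meshulam lower bound on the number of odd triples). Your sketch of (b) gestures at stratifying over supports but never identifies the size-$2$ obstruction $M$ as the critical case, and the ``star-like'' picture you describe is not the right minimal configuration here.
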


Compared with $\cHLM_p$, the probability threshold
at which the phase transition occurs differs by approximately a factor of
$2$. The reason for this difference is that the minimal obstruction in
$\cHLM_p$ is an edge which does not lie in any face, which by definition does
not exist in our model, so our minimal obstruction is different.

Indeed, we prove a hitting time result; the process becomes connected
at the moment when the last minimal obstruction disappears. In this case, the minimal obstruction, denoted $M$, is defined as follows.

\begin{definition}
A copy of $M$ in a $2$-complex $\cH$ is a face with vertices $a,b,c$
in which the edges $ab$ and $ac$ are in no other faces, and in which there is a path
$P_{ab}$ of edges between $a$ and $b$ which does not use the edges $ab$ or $ac$.
\end{definition}

In this case a bad function $f$ would take the value $1$ on $ab$ and $ac$ and $0$ everywhere else.
Since $ab$ and $ac$ are in no further faces, every face is even.
However, $P_{ab}$ together with the edge $ab$ would form a cycle with
precisely one value $1$, ensuring that $f$ cannot be generated by a
vertex bipartition. 

We write $M\subset \cH$ if $\cH$ contains such a structure. We say that a certain
face forms an $M$ if it can be chosen as the face $abc$ in an $M$.
(Note that this is a slight abuse of terminology, since it also requires
the existence and the non-existence of some other faces.)

For a hitting time result,
we are concerned with the random hypergraph \emph{process} in
which at each time step we add a face chosen uniformly at random
from among those faces not already present. (And we then consider
the complex generated by the hypergraph at each time step.)
At time $m$, this gives us the random uniform hypergraph
$H^3(n,m)$. However, many calculations are easier in the random
binomial hypergraph $H^3(n,p)$, where $p=m/\binom{n}{3}$.

Recall that $\cH_m(n)$ denotes the complex generated by $H^3(n,m)$ and write
$(\cH_m(n))$ for the corresponding process.

\begin{thm}\label{thm:hittingtime}
With high probability, $(\cH_m(n))$ is not \formalconnected\ until the moment when the last
copy of $M$ disappears, and is \formalconnected\ thereafter.
The change happens at time
$$
m=\frac{n^2}{6}\left(\log n+\frac12 \log \log n + O(1)\right).
$$
\end{thm}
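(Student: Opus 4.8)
The plan is to carry the threshold statement of Theorem~\ref{thm:binomialthreshold} up to a hitting time statement by the standard two-round / sandwiching strategy, combined with a careful analysis of the minimal obstruction $M$. First I would set up the correspondence between the process $(\cH_m(n))$ and the binomial model $\cH_p(n)$ with $p = m/\binom{n}{3}$, using the usual monotone coupling for the \emph{hypergraph} (the edge set is monotone even though \connectivity\ is not) so that events about the presence of faces and of copies of $M$ transfer between the two models up to an error window of $m$ of order $\sqrt{m\log n}$ or so. The target time is $m^\star = \frac{n^2}{6}(\log n + \tfrac12\log\log n)$, corresponding to $p^\star$ of the form $\frac{\log n + \frac12 \log\log n}{n}$, which is exactly the threshold in Theorem~\ref{thm:binomialthreshold}.

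The core of the argument splits into three claims. \emph{(a) Below the hitting time there is whp a copy of $M$:} I would show that at time $m = \frac{n^2}{6}(\log n + \tfrac12\log\log n - \omega)$ there is whp a face $abc$ with $ab$ and $ac$ in no other face and with $a,b$ connected by a path avoiding $ab,ac$; the connecting path is cheap because the shadow graph is already well inside its own connectivity threshold (it behaves like $G(n, q)$ with $q = 1-(1-p)^{n-2} \approx 1-e^{-np}$, which is $\omega(\log n/n)$), so the only real content is a first/second moment computation for the number of faces $abc$ whose two edges $ab,ac$ are ``lonely''. The expected number of such configurations is $\Theta(n^3 p (1-p)^{2(n-2)})$-ish, and matching this with the scaling in the statement is what pins down the $\frac12\log\log n$ correction; a second moment (or Chen--Stein) bound gives concentration and in fact asymptotic Poisson behaviour for the count of such faces. \emph{(b) Above the hitting time there is whp no copy of $M$ and the complex is \connected:} here I would invoke Theorem~\ref{thm:binomialthreshold} to get \connectivity\ at $p = \frac{\log n + \frac12\log\log n + \omega}{n}$, and separately a first-moment argument to show no copy of $M$ survives. \emph{(c) The hitting time is sharp, i.e.\ the two events coincide:} this is where one must show that within the narrow window around $m^\star$, copies of $M$ appear and disappear in the ``expected'' way and, crucially, that $M$ is the \emph{only} obstruction to \connectivity\ that can be present in this window — any other bad $0$-$1$ function would have to be supported on a structure that is whp absent this late in the process.

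I expect step~(c), the ``only obstruction'' part, to be the main obstacle, exactly because \connectivity\ is not monotone: one cannot simply say ``disconnected before, connected after, done''. One has to rule out, uniformly over the whole window, bad functions whose support is not of the form $\{ab, ac\}$ for a face $abc$ — e.g.\ supports spanning larger sub-configurations of faces and shadow edges that carry an odd cycle. The strategy would be to classify minimal bad supports by the local structure of faces meeting their edges, show that the minimal obstruction by this classification is precisely $M$ (up to configurations that are strictly denser and hence have vanishing expectation by $p = \Theta(\log n/n)$), and then run a union bound over these denser configurations; the monotone hypergraph coupling lets us do this union bound at a single $p$ slightly above $p^\star$ and transfer it across the window. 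Once (a), (b), (c) are in place, the two-round coupling between $\cH_m$ and $\cH_p$ closes the gap between the $\omega$-windows to the claimed $O(1)$ window in $m$, and the displayed formula for $m$ follows by writing $m = p\binom{n}{3}$ and expanding $\binom{n}{3} = \frac{n^3}{6}(1+O(1/n))$.
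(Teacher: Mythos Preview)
Your proposal has two genuine gaps, both rooted in the non-monotonicity that you yourself flag.

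\textbf{Subcritical side.} Your step~(a) argues that at a \emph{single} time $m = \frac{n^2}{6}(\log n + \tfrac12\log\log n - \omega)$ there is whp a copy of $M$. But the theorem asserts that the process is not \connected\ at \emph{every} time up to $p_M$, and since the presence of $M$ is itself non-monotone (it demands a face together with the absence of other faces), a second-moment bound at one time says nothing about earlier or intermediate times. The paper handles this by partitioning $[0,p_M)$ into five explicit intervals and exhibiting, for each interval past the topological-connectivity threshold $p_T$, a \emph{specific} copy $M_i$ of $M$ that persists throughout that interval (Lemmas~\ref{lem:firstobstruction} and~\ref{lem:obstructionintervals}). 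Such chaining is unavoidable: the last $M$ to disappear cannot be expected to have existed since $p_T$, so no single copy covers the whole range.

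\textbf{Supercritical side.} Your step~(c) proposes to rule out larger bad supports by a union bound ``at a single $p$ slightly above $p^\star$'' and then ``transfer it across the window'' via the monotone hypergraph coupling. This does not work: a bad support requires certain faces to be present (for super-connectedness) \emph{and} certain triples to be non-faces (no odd face), so its existence is monotone in neither direction, and a first-moment bound at one $p$ does not propagate to other $p$. The paper treats the two regimes separately. For supports of size $k\ge\log n$ a breadth-first search enumerates super-connected supports sharply enough that the failure probability is $n^{-\Theta(\log n)}$, small enough to union-bound over all $O(n^3)$ remaining birth times directly (Lemma~\ref{lem:largesupport}). For supports of size $3\le k\le\log n$ the first moment at one $p$ only gives $o(1)$, so instead the paper proves a structural lemma (Lemma~\ref{lem:throughM}): if $\cH$ is \connected\ with a highly connected shadow graph and adding a single face creates a bad function of small support, then that face must itself form a copy of $M$. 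Combined with the fact that whp no new $M$ appears after $p_M$ (Lemma~\ref{lem:nomoreminimal}), this shows \connectivity\ is preserved for all $p\ge p_M$ simultaneously, not merely at one value.
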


\subsection{Paper overview and proof methods}

This paper is laid out as follows.

In Section~\ref{sec:minimal} we will determine when the last minimal
obstruction disappears. Note that the presence of copies of $M$ is not
monotone, though we will show that at around the threshold determined
in the introduction, whp the complex becomes $M$-free and no copy of
$M$ will appear for the rest of the process.

In Section~\ref{sec:subcritical} we will prove the subcritical cases of Theorems~\ref{thm:binomialthreshold} and~\ref{thm:hittingtime}. The strategy is to divide the subcritical range into five subintervals
\begin{itemize}
\item $I_0:=[0,p_T)$
\item $I_1:=[p_T,p_1)$
\item $I_2:=[p_1,p_2)$
\item $I_3:=[p_2,p_3)$
\item $I_4:=[p_3,p_M)$.
\end{itemize}
Here $p_T$ is the birth time of the face which causes the complex to become topologically connected and
$p_M$ is the birth time of the face which causes the last copy of $M$ to disappear, which we prove to be
$$p_T = (2+o(1))\frac{\log n}{n^2}$$
and
$$p_M=\frac{\log n + \frac12 \log \log n +O(1)}{n}$$
whp (see Corollaries~\ref{cor:topologicallyconn} and~\ref{cor:lastobstruction}, respectively), while
$0<p_T<p_1<p_2<p_3<p_M$ are chosen appropriately. Clearly in the interval $[0,p_T)$ the complex is
not topologically connected and therefore not \connected. We then prove that whp there are four copies
of $M$, called $M_i$ for $1\le i \le 4$, such that $M_i$ exists in the complex throughout the interval
$I_i$ (Lemmas~\ref{lem:firstobstruction} and~\ref{lem:obstructionintervals}). Together, these intervals cover the
entire range $[0,p_M)$ and resolve the subcritical case.

We note that while it may be possible to reduce the number of intervals
by reducing the number of copies of $M$ we use to cover the range from
$p_T$ to $p_M$, we certainly cannot expect just one copy of $M$
to suffice. This is because to push the argument all the way to $p_M$
we must certainly pick the very last copy of $M$ to disappear. Having
no choice about which copy of $M$ we can choose means we cannot expect
it to have existed for a very long time.

The supercritical case of Theorems~\ref{thm:binomialthreshold} and~\ref{thm:hittingtime} will be proved
in Section~\ref{sec:supercritical}. Since whp the complex is both topologically connected and contains no $M$
in this range, it remains to prove that whp there are no larger obstacles to \connectivity, i.e.\ bad 0-1 functions
with support of size at least $3$. We consider a minimal bad support and observe important properties that
the minimality guarantees (Lemma~\ref{lem:minimalbad}). Weaker versions of these properties were already
considered by Linial and Meshulam. For example, a minimal bad support has to be connected. However,
for our proof to work we need an additional property which we call \emph{super-connectedness}
(Definition~\ref{def:superconnected}), which also guarantees the existence of certain faces related to this support.

With this new definition, we prove the supercritical case in two ranges of the size $k$ of the support.
First for $3 \le k \le \log n$, we have a simple application of the first moment method (Lemma~\ref{lem:smallsupport}). We then prove
the case when $k\ge \log n$, for which we need to bound the total number of possible bad supports
more cleverly than in the case of small $k$, which we do via a breadth-first search process (Lemma~\ref{lem:largesupport}).
This search process, which is the second point in the supercritical case
where our proof differs from that of~\cite{LinialMeshulam06}, allows us to track the
construction of a super-connected support and thus count the number of possibilities
much more precisely than other methods allow us to.

\section{Preliminaries}

\subsection{Intuition: Where ``should'' the threshold be?}

We first justify our definition above of the minimal obstruction $M$. Of course,
in one sense the smallest obstruction is an isolated vertex, but this is
rather an obstruction to \emph{topological} connectivity, and we expect the
complex to become topologically connected well before it becomes
\connected. We therefore assume topological connectivity
and consider what the minimal obstruction to \connectivity\ should be.

We need a 0-1 function on edges with no odd faces, and since each edge lies
in at least one face, this automatically means the support must have size
at least two.\footnote{Note that this is the reason why the \connectivity\ threshold in this
model is different to the threshold for the Linial \& Meshulam model in~\cite{LinialMeshulam06},
in which the minimal obstruction is an edge which does not lie in a face---in our
model, by definition, such an edge does not exist.}
The structure $M$ described above gives rise to just such a function,
as previously described.

Note that the property $M\subset \cH$ is also \emph{not} a monotone property---it
demands the existence of a face and a path but also the non-existence of other faces.
This will make various arguments slightly more tricky. However, intuitively the 
hypergraph process will initially not be \connected\ and copies of $M$ will appear
before it becomes \connected. As more faces are added, the copies of $M$ will
become the only obstructions to \connectivity\ and eventually when the last
copy disappears, the hypergraph becomes \connected\ and remains so. We will prove that
this intuition is correct in the course of the paper.

Let us provide a rough argument for why the threshold for the disappearance
of the last copy of $M$ should be at about $p=\frac{\log n+\frac12 \log \log n}{n}$.

First consider when the path $P_{ab}$ is likely to appear. The probability that a
fixed edge exists is approximately $1-(1-p)^n \simeq np$ (if this is small). There are
$\Theta(n^{k-1})$ possible paths of length $k$, and so the expected number of
these is $\Theta (n^{k-1}(np)^k)$. This is constant when $p= n^{-\frac{2k-1}{k}}$,
so we can expect a constant length path to exist if $p \ge n^{-2+\delta}$, for some
small constant $\delta>0$. Note that this bound is significantly smaller than the
$p=\Theta(\frac{\log n}{n})$ that we will predominantly be considering.

Next, consider when a face with two edges contained in no other face exists. The probability of three arbitrary vertices forming such a face is approximately $3p(1-p)^{2n}\simeq 3pe^{-2pn}$, so the expected number of these is of order $n^3pe^{-2pn}$.

To determine the threshold (asymptotically approximately), we seek $p$ such that $n^3pe^{-2pn}=1$. This holds when
$$
3\log n + \log p -2pn = 0
$$
which implies
\begin{align*}
p & = \frac{3\log n+\log p}{2n}\\
& = \frac{3\log n + \log \left(\frac{3\log n+\log p}{2}\right)-\log n}{2n}\\
& = \frac{\log n+\frac{1}{2}\log \log n + O(1)}{n}
\end{align*}
and so we expect a phase transition around $p=\frac{\log n+\frac12 \log \log n}{n}$.

\subsection{Basic facts and notation}

We will often use the following standard result.

\begin{lem}[Chernoff Bound, see e.g.~\cite{LinialMeshulam06}]\label{lem:chernoff}
Given a binomially distributed random variable $X$ with mean $\mu$ and a real number $a> 0$,
\begin{align*}
\Pr(X\ge \mu + a) &\le \exp \left( - \frac{a^2}{2(\mu+a/3)}\right); \\
\Pr(X\le \mu - a) &\le \exp \left( - \frac{a^2}{2\mu}\right).
\end{align*}
\end{lem}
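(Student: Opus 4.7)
The plan is to derive both tail bounds from the standard exponential moment (Bernstein) method. Write $X = \sum_{i=1}^N X_i$ as a sum of independent Bernoulli variables with parameters $p_i$, so that $\mu = \sum p_i$. For any $t > 0$, Markov's inequality applied to $e^{tX}$ gives $\Pr(X \ge \mu + a) \le e^{-t(\mu+a)} \EE[e^{tX}]$, and independence together with the elementary inequality $1 + p_i(e^t - 1) \le \exp(p_i(e^t - 1))$ yields $\EE[e^{tX}] \le \exp(\mu(e^t - 1))$. Optimizing the resulting bound $\exp(\mu(e^t - 1) - t(\mu + a))$ in $t$ leads to the choice $t = \log(1 + a/\mu)$, producing the Chernoff--Hoeffding form
\[
\Pr(X \ge \mu + a) \le \exp\bigl(-\mu\,h(a/\mu)\bigr), \qquad h(x) := (1+x)\log(1+x) - x.
\]

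Second, I would upgrade this to the stated form by checking the elementary analytic inequality $h(x) \ge \frac{x^2}{2(1 + x/3)}$ for all $x \ge 0$. This can be done by clearing denominators and comparing Taylor expansions, or by showing that the difference of the two sides vanishes to second order at $x = 0$ and has nonnegative derivative thereafter. Setting $x = a/\mu$ then gives $\mu\,h(a/\mu) \ge \frac{a^2}{2(\mu + a/3)}$, which is precisely the desired upper-tail bound.

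For the lower tail, I would run the same argument with $t < 0$, writing $\Pr(X \le \mu - a) \le e^{t(\mu - a)} \EE[e^{tX}] \le \exp(\mu(e^t - 1) - t(\mu - a))$ and again optimizing via $t = \log(1 - a/\mu)$ (valid for $0 < a \le \mu$). This yields $\Pr(X \le \mu - a) \le \exp(-\mu\,h(-a/\mu))$, and since $h(-x) \ge x^2/2$ for $0 \le x \le 1$ (an easy one-variable calculus check), we recover $\exp(-a^2/(2\mu))$. For $a > \mu$ the left-hand probability is zero, so the bound is trivially valid.

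The only genuinely non-routine step is verifying the two real-analytic inequalities $h(x) \ge x^2/(2 + 2x/3)$ and $h(-x) \ge x^2/2$; both are standard but fiddly, and I would expect them to be the main obstacle in writing out a self-contained proof. Since the lemma is explicitly presented as a standard tool and attributed to the literature, I would in practice simply cite these inequalities from a reference such as \cite{LinialMeshulam06} rather than reprove them.
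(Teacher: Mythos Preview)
Your proposal is correct and follows the standard Bernstein exponential-moment derivation; the paper itself gives no proof of this lemma at all, simply stating it as a known tool with a citation, which is exactly what you suggest doing in your final paragraph.
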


To aid in the transition between the two models $H^3(n,p)$ and $H^3(n,m)$ of random hypergraphs
(and thus also between the corresponding models $\cH_p$ and $\cH_m$ of random complexes),
we utilise the standard trick of birth times: For each triple of vertices,
choose a number from $[0,1]$ uniformly at random and independently
for each triple. This will be the \emph{birth time} of the corresponding
face. Then for any probability $p$, the hypergraph consisting of those
faces with birth time at most $p$ is distributed as $H^3(n,p)$, while
the hypergraph process $(H^3(n,m))$ can be obtained by ordering
the faces by increasing birth time (with probability $1$ no two faces
have the same birth time).

With this point of view, we sometimes think of $H^3(n,p)$
(and correspondingly $\cH_p$) as also being
a process in which $p$ is gradually increased from $0$ to $1$. We
sometimes talk of taking a ``union bound over $p$'' in a certain range---this
makes little sense if we think of $p$ as being able to take any
value within the interval, but if we condition on the set of birth times
then in fact we only consider $p$ taking the value of all birth times
within the appropriate interval, which is a discrete set.

Finally in order to transfer various results between models, we observe the following,
which is a simple application of the Chernoff bound.
\begin{claim}\label{claim:faces}
Given any interval $[q_1,q_2]\subset [0,1]$ (where $q_1,q_2$ may depend on $n$)
if $(q_2-q_1)\binom{n}{3} \to \infty$, then with high probability the number of birth
times within $[q_1,q_2]$ is $(1\pm o(1))(q_2-q_1)\binom{n}{3}$.
\end{claim}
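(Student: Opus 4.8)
The plan is to observe that the number of birth times within $[q_1,q_2]$ is exactly a binomially distributed random variable, and then apply the Chernoff bound from Lemma~\ref{lem:chernoff}. Indeed, each of the $\binom{n}{3}$ triples has its birth time chosen uniformly and independently from $[0,1]$, so the indicator that a given triple's birth time lies in $[q_1,q_2]$ is a Bernoulli random variable with success probability $q_2-q_1$. Hence the total count $X$ of birth times in the interval satisfies $X\sim\Bi\bigl(\binom{n}{3},q_2-q_1\bigr)$, with mean $\mu=(q_2-q_1)\binom{n}{3}$, which by hypothesis tends to infinity.

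It then remains to show that $X=(1\pm o(1))\mu$ whp. First I would fix a function $a=a(n)$ tending to infinity slowly enough that $a=o(\sqrt\mu)$ but $a\to\infty$; for instance $a:=\sqrt{\mu}\cdot g(n)$ where $g(n)\to\infty$ arbitrarily slowly, or more concretely one can simply take $a:=\mu^{2/3}$, which works since $\mu\to\infty$. Applying the upper tail bound of Lemma~\ref{lem:chernoff} gives $\Pr(X\ge\mu+a)\le\exp(-a^2/(2(\mu+a/3)))$, and since $a=o(\mu)$ the denominator is $(2+o(1))\mu$, so this probability is at most $\exp(-(1+o(1))a^2/(2\mu))\to0$ by the choice of $a$. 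The lower tail bound gives $\Pr(X\le\mu-a)\le\exp(-a^2/(2\mu))\to0$ similarly. Combining, whp $|X-\mu|<a=o(\mu)$, i.e.\ $X=(1\pm o(1))\mu$, as required.

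There is essentially no genuine obstacle here: the only mild subtlety is choosing the deviation parameter $a$ so that simultaneously $a/\mu\to0$ (to get the multiplicative $(1\pm o(1))$ conclusion) and $a^2/\mu\to\infty$ (to make the Chernoff bounds tend to $0$), which is possible precisely because $\mu\to\infty$. One should also note that the claim is applied later with $[q_1,q_2]$ depending on $n$ (indeed, typically $q_2-q_1\to0$), but nothing in the argument above uses a lower bound on the interval length other than $\mu=(q_2-q_1)\binom n3\to\infty$, so the statement holds in exactly the stated generality.
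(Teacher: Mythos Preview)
Your proposal is correct and essentially identical to the paper's proof: both identify $X\sim\Bi\bigl(\binom{n}{3},q_2-q_1\bigr)$ and apply the Chernoff bound with deviation $a=\mu^{2/3}$. The paper's version is slightly more terse (combining both tails into a single line and bounding the exponent by $\mu^{1/4}$), but there is no substantive difference.
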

\begin{proof}
Let $X$ be the number of birth times within $[q_1,q_2]$, which is distributed $\Bi(\binom{n}{3},q_2-q_1)$. Let $\mu:=(q_2-q_1)\binom{n}{3}\to \infty$.
Observe that by Lemma~\ref{lem:chernoff},
\begin{equation*}
\Pr \left( |X-\mu| \ge \mu^{2/3} \right) \le
2\exp \left( -\frac{\mu^{4/3}}{3\mu} \right) \le 2 \exp \left( -\mu^{1/4} \right) = o(1).\qedhere
\end{equation*}
\end{proof}
We will apply this claim a bounded number of times without explicitly mentioning it,
often with $q_1=0$. Since we will only apply it a bounded number of times
we may use a union bound over all error probabilities of size $o(1)$ to ensure that
the stated events still hold whp.

We also note that conditioned on an edge not being present at time $p=q_1$,
the probability that it is present at time $q_2$ is $\frac{q_2-q_1}{1-q_1}$.
Thus we may obtain $\cH_{q_2}$ from $\cH_{q_1}$ by sprinkling an additional
probability of $\frac{q_2-q_1}{1-q_1}$. Since we will only ever want to consider
such a situation with $q_1=o(1)$, we often simply take $q_2-q_1$ as an approximation
for $\frac{q_2-q_1}{1-q_1}$. This will be valid since a lower bound on the sprinkling
probability will be sufficient.

We ignore floors and ceilings when this does not significantly affect the argument.

\section{Minimal obstructions}\label{sec:minimal}

In this section we prove various results related to when copies of $M$ exist in $\cH_p$. In particular, let
$p_M^*$ be the first birth time larger than $\frac{\log n + \frac14 \log \log n}{n}$ such that $\cH_p$
contains no copy of $M$, and recall that $p_M$ is the time at which the last copy of $M$ disappears.

Note that in theory we could have $p_M^*=\frac{\log n + \frac14 \log \log n}{n}$ if at this time there are no copies of $M$, though the results of this section show that whp this does not happen.
Our goal is to show that in fact whp
$$p_M^*=p_M=\frac{\log n+\frac12 \log \log n+O(1)}{n}.$$

For the rest of the paper, let us fix some constant
$0<\eps <\frac{1}{10}$.
(We think of $\eps$ as being arbitrarily small, but any constant in
this range will be sufficient.) We will need the following basic
fact---it tells us that by the time we have probability $p=n^{-1-\eps}$,
the shadow graph is highly connected.

\begin{lem}\label{lem:denseshadow}
Let $p=n^{-1-\eps}$. Then with high probability, every pair of vertices is connected by at least
$\sqrt{n}$ paths of length $2$ in the shadow graph of $\cH_p$.
\end{lem}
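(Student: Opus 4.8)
The plan is a straightforward second-moment / concentration argument applied to the number of length-$2$ paths between a fixed pair of vertices, followed by a union bound over all $\binom{n}{2}$ pairs. Fix vertices $u,v$. For a vertex $w \notin \{u,v\}$, the edge $uw$ is present in the shadow graph of $\cH_p$ if and only if some face containing $u$ and $w$ is present, which happens with probability $1-(1-p)^{n-2} = (1+o(1))np = (1+o(1))n^{-\eps}$. Similarly for $vw$. These two events are \emph{not} independent (a face $\{u,v,w\}$ would witness both), but the depend(ence is mild: the probability that the specific face $\{u,v,w\}$ is present is only $p = n^{-1-\eps}$, which is lower order. So the probability that $w$ is a common neighbour of $u$ and $v$ (i.e.\ that $u$--$w$--$v$ is a path of length $2$) is $(1+o(1))n^{-2\eps}$, and the expected number of such witnesses $w$ is $(1+o(1))n^{1-2\eps}$. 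Since $\eps < \frac{1}{10}$, this is at least $n^{1-1/5} = n^{4/5}$, comfortably larger than $\sqrt n$ with room to spare.

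Next I would show concentration. Let $X = X_{u,v}$ denote the number of common neighbours $w$. Write $X = \sum_{w} \mathbf{1}_w$ where $\mathbf{1}_w$ indicates that $w$ is a common neighbour. The indicators $\mathbf{1}_w$ for distinct $w$ are \emph{not} independent in general, because a single face $\{w, w', x\}$ could affect the status of edges at $w$ and at $w'$ — but in fact the only faces relevant to $\mathbf{1}_w$ are those containing $u$ or $v$, and two such faces that are relevant to different vertices $w \ne w'$ can only overlap in $\{u\}$, $\{v\}$, or $\{u,v\}$, never in $w$ or $w'$. So $\mathbf{1}_w$ and $\mathbf{1}_{w'}$ are in fact independent unless one of the two faces $\{u,v,w\}$, $\{u,v,w'\}$ is involved; a cleaner route is simply to expose, for each $w$, only the faces of the form $\{u,w,*\}$ and $\{v,w,*\}$ with $* \notin\{u,v,w\}$ and handle the at most two faces $\{u,v,w\}$ separately. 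Exposing in this way the $\mathbf 1_w$ are genuinely independent (they depend on disjoint sets of triples), so $X$ stochastically dominates a $\Bi(n-2, (1-o(1))n^{-2\eps})$ random variable, and Lemma~\ref{lem:chernoff} gives $\Pr(X \le \tfrac12 \EE X) \le \exp(-\Omega(n^{1-2\eps})) = \exp(-\Omega(n^{4/5}))$. In particular $\Pr(X < \sqrt n) = \exp(-\Omega(n^{4/5}))$.

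Finally, a union bound over all $\binom n 2 < n^2$ pairs $\{u,v\}$ gives failure probability at most $n^2 \exp(-\Omega(n^{4/5})) = o(1)$, so whp every pair is connected by at least $\sqrt n$ paths of length $2$, as claimed.

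I do not expect any genuine obstacle here: the expected count beats $\sqrt n$ by a polynomial factor, and the only mild subtlety is the dependence among the common-neighbour indicators and between $uw$ and $vw$ caused by the single shared triple $\{u,v,w\}$. Both are handled cleanly by exposing disjoint blocks of triples (one block per candidate $w$) and noting that the shared triples contribute only a lower-order correction, which one can absorb by replacing $(1+o(1))$ with a slightly weaker but still polynomially large lower bound on $\EE X$ before applying the Chernoff bound. If one prefers to avoid the stochastic-domination bookkeeping, the alternative is a Chebyshev argument: $\var(X) = O(\EE X)$ by near-independence, so $\Pr(X < \sqrt n) \le \Pr(|X - \EE X| \ge \tfrac12 \EE X) = O(1/\EE X) = O(n^{2\eps - 1})$, which is $o(n^{-2})$ since $\eps < \frac{1}{10}$, again giving the union bound. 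Either way the conclusion follows.
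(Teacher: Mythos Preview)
Your overall strategy---count common neighbours of a fixed pair, show concentration, union-bound over pairs---is the same as the paper's, but the independence step contains a genuine gap. You claim that after setting aside the single triple $\{u,v,w\}$, the indicators $\mathbf{1}_w$ depend on disjoint sets of triples. This is false: the face $\{u,w,w'\}$ is of the form $\{u,w,*\}$ with $*=w'$ \emph{and} of the form $\{u,w',*\}$ with $*=w$, so it simultaneously witnesses the edge $uw$ (relevant to $\mathbf{1}_w$) and the edge $uw'$ (relevant to $\mathbf{1}_{w'}$). The blocks you expose for different $w$ overlap, and the stochastic domination by a binomial is not justified as written.

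Your Chebyshev fallback does not rescue the argument. Even granting $\var(X)=O(\EE X)$, Chebyshev gives only $\Pr(X<\sqrt n)=O(1/\EE X)=O(n^{2\eps-1})$, and for $\eps<\tfrac1{10}$ this is at best about $n^{-4/5}$, which is \emph{not} $o(n^{-2})$; the union bound over $\binom{n}{2}$ pairs then fails. (Your claim that $n^{2\eps-1}=o(n^{-2})$ is simply an arithmetic slip.)

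The paper's proof supplies exactly the missing device: split the vertices other than $x,y$ into two disjoint sets $U,Z$ of size $n/3$, allow only midpoints $z\in Z$, and only count the edge $xz$ (resp.\ $yz$) as present if it is witnessed by a face $\{x,z,u\}$ (resp.\ $\{y,z,v\}$) with $u,v\in U$. Because $U\cap Z=\emptyset$, the triples exposed for different $z\in Z$ are genuinely disjoint, so the indicators are independent and Chernoff applies directly, yielding failure probability $e^{-\Omega(n^{1-3\eps})}$, which comfortably beats the $n^2$ union bound. Your argument is easily repaired by adopting this $U$/$Z$ split.
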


\begin{proof}
Fix two vertices $x$ and $y$ and consider the number of paths of length $2$
connecting them. To ensure independence in various calculations, we will
only count paths of a certain type, which gives us a lower bound on the
total number of paths. To this end, we pick disjoint vertex sets $U$ and
$Z$ not containing $x$ or $y$ and of size $n/3$. We will count paths $xzy$
where $z \in Z$ and the edges $xz$ and $zy$ exist because there are faces
$uxz$ and $vyz$ with $u,v \in U$. Note that for fixed $x$ and $y$, all
faces which we consider are distinct, ensuring independence. Let $X$ be
the number of such paths $xzy$.

Now, the probability that a vertex $z$ is the midpoint of such a path is
equal to the probability that there are $u,v\in U$, not necessarily distinct,
with $uxz$ and $vyz$ both being faces of the hypergraph. This probability is
$$
(1-(1-p)^{n/3})^2 \ge \left(pn/3 -(pn/3)^2/2\right)^2 \ge n^{-3\eps}.
$$
This probability is independent for each $z$, so the number of paths we obtain dominates Bi$(n/3,n^{-3\eps})$, and by Lemma~\ref{lem:chernoff} the probability that this is less than $\sqrt{n}$ is at most
$$
\exp\left(-\frac{(n^{1-3\eps}/3-\sqrt{n})^2}{2n^{1-3\eps}/3}\right)\le \exp \left(- \frac{n^{1-3\eps}}{7} \right) \stackrel{\left(\eps<\frac16\right)}{\le} e^{-\sqrt{n}}.
$$
Thus we may take a union bound over all $\binom{n}{2}$ possible choices for $x$ and $y$ and the probability that the statement in the lemma does not hold is at most
$$
\binom{n}{2}e^{-\sqrt{n}} \le e^{-n^{1/3}} = o(1)
$$
as required.
\end{proof}

Lemma~\ref{lem:denseshadow} tells us that the paths necessary for an $M$
are very likely to exist. This motivates the following definition, which
is a relaxation of $M$: Let $M'$ consist of a face with two edges contained
in no other face (i.e. a copy of $M$, but without the requirement of having
an additional path in the shadow graph). Clearly if $M' \not\subset \cH$, then
also $M\not\subset \cH$. We will usually consider $M'$ in a range of $p$ where
the existence of paths is extremely likely, so the existence of $M'$ and $M$
are essentially equivalent events (though we will only ever use the bound in
the correct direction).

We next prove that in the range shortly before the critical threshold for
\connectivity, the expected number of obstructions is concentrated around
its mean. (This result is stronger than we need for this section, but the
stronger version will be necessary later on.)
To help with this we talk of \emph{rooted triples} forming a copy of $M'$.
A rooted triple is a triple of vertices $x,y,z$ in which one of these
vertices (say $x$) is the root. We say that this rooted triple forms a
copy of $M'$ if the triple forms a face and $xy$ and $xz$ are in no other faces.

\begin{lem}\label{lem:secondmoment}
Let $\omega$ be any function of $n$ which tends to infinity, let
$\frac1{n\log n}\le p \le \frac{\log n + \frac12 \log \log n - \omega}{n}$
and let $X$ be the number of rooted copies
of $M'$ in $\cH_p$. Then with high probability, $X\sim \EE(X) \ge \frac{n^3p}{3}\exp (-2pn)$.
\end{lem}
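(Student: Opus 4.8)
We need to show that $X$, the number of rooted copies of $M'$, is concentrated around its expectation. First I would compute $\EE(X)$ precisely. A rooted triple $(x;y,z)$ forms a copy of $M'$ if $xyz$ is a face and the edges $xy$, $xz$ lie in no other face. The probability that $xyz$ is a face is $p$; conditioned on this, the probability that $xy$ is in no other face is $(1-p)^{n-3}$, and likewise for $xz$, and these two events are independent since they concern disjoint triples (those containing $\{x,y\}$ but not $z$, resp.\ $\{x,z\}$ but not $y$). Hence each rooted triple forms $M'$ with probability $p(1-p)^{2(n-3)} = (1+o(1))p\exp(-2pn)$ in the given range of $p$, using $pn = O(\log n)$ so that $(1-p)^{2(n-3)} = \exp(-2pn + O(p^2 n)) = (1+o(1))\exp(-2pn)$. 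Since there are $3\binom{n}{3} = (1+o(1))\frac{n^3}{2}$ rooted triples, we get $\EE(X) = (1+o(1))\frac{n^3}{2}p\exp(-2pn) \ge \frac{n^3 p}{3}\exp(-2pn)$ for large $n$, which also tends to infinity in the stated range of $p$ (this last point is where the lower bound $p \ge \frac{1}{n\log n}$ is used, together with $pn \le \log n + \frac12\log\log n$).

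Next I would estimate $\EE(X^2) = \sum \Pr(T_1, T_2 \text{ both form } M')$ over ordered pairs of rooted triples $T_1, T_2$. The diagonal-type contributions (where $T_1 = T_2$ as unrooted triples, or they share two vertices) are lower-order: there are $O(n^4)$ pairs sharing two vertices, each contributing $O(p \cdot \exp(-2pn))$ or so, which is $o(\EE(X)^2/\ldots)$ — these need a quick check but are routine. The main term comes from pairs $T_1 = (x_1; y_1, z_1)$, $T_2 = (x_2; y_2, z_2)$ with all six vertices distinct, or sharing exactly one vertex. When the triples are vertex-disjoint, the face events are independent, but the ``in no other face'' conditions interact slightly: the triple $\{x_1, y_1, x_2\}$ (say) is forbidden from being a face by the $M'$-condition at $T_1$ and also contributes to the neighbourhood count of $x_2$'s edges. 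The standard way to handle this is to write $\Pr(T_1, T_2) \le \Pr(T_1)\Pr(T_2) / (1-p)^{-c}$ for a bounded number $c$ of ``shared forbidden triples'', i.e.\ there are only $O(1)$ triples that get double-counted, so the joint probability is at most $(1+O(p))$ times the product — giving $\EE(X^2) \le (1+o(1))\EE(X)^2 + (\text{lower order})$.

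**The main obstacle** will be organizing the second-moment computation so that the cross terms are controlled cleanly: one has to be careful that the ``forbidden'' triples (those containing an edge of $T_1$ or $T_2$) overlap in only a bounded number of places when the root-triples are nearly disjoint, so that the correlation factor is $1 + O(p) = 1 + o(1)$, while separately bounding the $O(n^4)$ and $O(n^5)$ families of overlapping pairs by showing their total contribution is $o(\EE(X)^2)$ — this uses $\EE(X) \to \infty$. Once $\EE(X^2) = (1+o(1))\EE(X)^2$ is established, Chebyshev's inequality gives $\Pr(|X - \EE(X)| \ge \eps \EE(X)) \le \var(X)/(\eps^2 \EE(X)^2) = o(1)$ for any fixed $\eps > 0$, which yields $X \sim \EE(X)$ whp. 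The inequality $\EE(X) \ge \frac{n^3 p}{3}\exp(-2pn)$ then follows from the explicit computation in the first step, for $n$ large.
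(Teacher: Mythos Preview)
Your proposal is correct and follows essentially the same approach as the paper: compute $\EE(X)$ explicitly, verify $\EE(X)\to\infty$ on the given range of $p$, then control $\EE(X^2)$ by splitting pairs of rooted triples according to the size of their intersection, using that distinct triples always require two faces and $4n-O(1)$ non-faces so that the joint probability is $(1+o(1))$ times the square of the single-triple probability, and conclude via Chebyshev. The paper organises the overlap case analysis slightly differently (in particular it notes that when two triples share an edge, that edge is forced to lie in both faces and the \emph{other} four edges are the ones required to lie in no further face), but the structure and all key estimates match yours.
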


\begin{proof}
We assume without loss of generality that $\omega = o(\log \log n)$.
We start by approximating the first moment. We have
\begin{equation}
\EE(X) = \binom{n}{3}3p(1-p)^{2(n-2)} \ge (1+o(1)) \frac{n^3p}{2} \exp(-(p+p^2)2n)
\end{equation}
and in particular the desired lower bound follows since $p^2n = o(1)$. Let us note here that
the expectation is maximised at $p=\frac{1}{2n-3}$, and so minimised at either ends of the range of $p$. It is simple to check that the upper extreme of $p$ gives the smaller expectation, which we bound by
\begin{align*}
\frac{n^3p}{3}\exp (-2pn) & \ge (1+o(1))\frac{n^2\log n}{3} \exp \left( -2\log n - \log \log n +2\omega \right)\\
& = \frac{1+o(1)}{3}e^{2\omega} \to \infty.
\end{align*}

We also need to calculate the second moment. To do this we calculate the probability that two rooted triples of vertices both form minimal obstructions, distinguishing across the size of their intersection, showing that the probabilities are of similar order regardless of the intersection. Since almost all pairs of triples do not intersect, this will show that $\EE(X^2)$ is dominated by the non-intersecting pairs of triples, as required.

The contribution to $\EE(X^2)$ made by rooted triples which are the same except possibly the root is at most $9\EE(X) = o(\EE(X)^2)$. If the two triples are not the same, then we certainly require 2 faces. We claim that we also require $4n-O(1)$ non-faces in all cases. This is certainly clear in the cases when the intersection has size at most $1$, since each of four edges must lie in $n-O(1)$ non-faces, and we can only double-count faces containing two of these, of which there are at most $\binom{4}{2}$. On the other hand, if the intersection has size $2$, i.e.\ the two triples intersect in an edge, then this edge must certainly be in these two faces, and the remaining four edges are the ones which are in no further faces. Thereafter, we argue as before.

Now we note that $(1-p)^{O(1)} = 1-o(1)$, and so the probability of two rooted triples (in which the triples are not identical)
forming two copies of $M'$ is approximately the same regardless of their intersection, and in particular approximately asymptotically
the square of the probability of one rooted triple forming a copy of $M'$.

Thus the expected number of pairs of copies of $M'$ is
$$
\left(9\binom{n}{3}^2-O\left(n^5\right)\right)(1+o(1))p^2(1-p)^{4n-O(1)} = (1+o(1))\EE(X)^2.
$$
Thus $\var (X) = \EE(X^2)-\EE(X)^2 = o(\EE(X)^2)$ and by Chebyshev's inequality we have $X\sim \EE(X)$ whp, as required.
\end{proof}

\begin{lem}\label{lem:nomoreminimal}
Whp for all $p\ge p_M^*$, $\cH_p$ contains no copy of $M'$, and therefore also no copy of $M$, i.e. $p_M=p_M^*$.
\end{lem}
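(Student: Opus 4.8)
The plan is to establish the stronger assertion that $\cH_p$ contains no copy of $M'$ for every $p \ge p_M^*$; since $M' \not\subset \cH$ implies $M \not\subset \cH$, and since $M$ will be seen to be still present just before time $p_M^*$, this yields both the displayed statement and the equality $p_M = p_M^*$. Write $p_0 := \frac{\log n + \frac14 \log\log n}{n}$, so that $p_M^* > p_0 > n^{-1-\eps}$. The argument has two ingredients: (a) in the range $p \ge n^{-1-\eps}$ the properties $M \subset \cH_p$ and $M' \subset \cH_p$ are whp equivalent, so the definition of $p_M^*$ may be read as being about $M'$; and (b) whp no copy of $M'$ is ever \emph{created} after time $p_0$.

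For (a), apply Lemma~\ref{lem:denseshadow} at $p = n^{-1-\eps}$; since ``every pair of vertices spans at least $\sqrt n$ paths of length $2$'' is a monotone increasing property of the shadow graph, whp it holds in $\cH_p$ for all $p \ge n^{-1-\eps}$ simultaneously. If $abc$ is the central face of a copy of $M'$, with $ab$ and $ac$ in no further face, then of the $\ge \sqrt n$ length-$2$ paths between $a$ and $b$ none uses $ab$ and at most one (the one through $c$) uses $ac$; any of the rest serves as the path $P_{ab}$ and promotes the $M'$ to a copy of $M$. As every copy of $M$ trivially contains a copy of $M'$, we conclude that whp, for all $p \ge n^{-1-\eps}$, $\cH_p$ contains a copy of $M'$ if and only if it contains a copy of $M$.

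For (b), consider the $3\binom{n}{3}$ \emph{rooted} triples $(x;\{y,z\})$ and for each let $E_{x,y,z}$ be the event that the face $xyz$ is present, has birth time at least $p_0$, and is the first face (in birth-time order) through $xy$ as well as the first through $xz$. Conditioning on the birth time $t$ of $xyz$, the $2(n-3)$ other faces meeting $xy$ or $xz$ are distinct and each is born after $t$ with probability $1-t$, so $\Pr(E_{x,y,z}) = \int_{p_0}^{1}(1-t)^{2(n-3)}\,dt = (1+o(1))\,e^{-2np_0}/(2n)$, using $np_0^2 = o(1)$. Since $e^{-2np_0} = n^{-2}(\log n)^{-1/2}$, the expected number of events $E_{x,y,z}$ that occur is $(1+o(1))\,3\binom{n}{3} \cdot n^{-2}(\log n)^{-1/2}/(2n) = O((\log n)^{-1/2}) = o(1)$, so by Markov's inequality whp none occurs. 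A short case analysis, using that faces are never deleted, shows that a new copy of $M'$ can be created only at the birth of its central face and only when the two edges witnessing the $M'$ are both new edges of the shadow graph --- that is, only via some $E_{x,y,z}$. Hence whp no copy of $M'$ present in $\cH_p$ for $p \ge p_0$ is absent from $\cH_{p_0}$.

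Combining: by (a), since $\cH_{p_M^*}$ is $M$-free by definition and $p_M^* > n^{-1-\eps}$, whp $\cH_{p_M^*}$ is also $M'$-free; by (b), since $p_M^* > p_0$, whp $\cH_p$ is $M'$-free, hence $M$-free, for every $p \ge p_M^*$. Finally, applying Lemma~\ref{lem:secondmoment} with $\omega = \tfrac14\log\log n$ shows that whp $\cH_{p_0}$ contains a copy of $M'$ and hence, by (a), a copy of $M$; since $\cH_{p_0}$ and, by the definition of $p_M^*$, also $\cH_t$ for every birth time $t$ with $p_0 < t < p_M^*$ all carry a copy of $M$, the last copy of $M$ disappears exactly at time $p_M^*$, i.e.\ $p_M = p_M^*$. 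I expect the only genuine obstacle to be making the case analysis in (b) airtight: the non-monotonicity of $M'$ means one must argue carefully that new copies cannot spring into existence through any mechanism other than the birth of a fresh central face, which is precisely what reduces the whole statement to a one-line first-moment bound.
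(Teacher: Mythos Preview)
Your proof is correct and follows essentially the same approach as the paper: reduce $M$ to $M'$ via Lemma~\ref{lem:denseshadow}, then use a first-moment argument to show that whp no copy of $M'$ is \emph{created} after time $p_0 = \frac{\log n + \frac14\log\log n}{n}$. The only difference is in the execution of the first-moment bound: the paper first bounds (via Markov) the number of incident non-edge pairs of the shadow graph at time $p_0$ and then, for each such pair, bounds the conditional probability that the common triple is born before any competing triple by $O(1/n)$; you instead compute $\Pr(E_{x,y,z})$ directly by integrating over the birth time of the central face. Your route is slightly cleaner and yields a marginally sharper $O((\log n)^{-1/2})$ in place of the paper's $O((\log n)^{-1/3})$, but the idea is the same. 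Your worry about the case analysis in~(b) is unfounded: for a fixed rooted triple the existence of the corresponding copy of $M'$ is the conjunction of one increasing and two decreasing events in $p$, hence holds on an interval beginning exactly at the birth of the central face, so no other creation mechanism is possible.
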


\begin{proof}
We begin by observing that, conditioned on the high probability event of Lemma~\ref{lem:denseshadow},
a copy of $M'$ can only appear if there are two incident pairs which are both not in $\cH_{p_M}$, but such
that the triple containing both of them is born as a face before
any triple containing either one or the other.

We therefore first bound the number of pairs of incident pairs not in $\cH_p$, for $p=\frac{\log n + \frac14 \log \log n}{n}$. The probability that two incident pairs are both not in $\cH_p$ is $(1-p)^{2n-3} \le (1+o(1))e^{-2pn} = O\left(\frac{1}{n^2\sqrt{\log n}}\right)$. Therefore the expected number of such pairs is $O\left(\frac{n}{\sqrt{\log n}}\right)$ and by Markov's inequality, whp there are at most $\frac{n}{\sqrt[3]{\log n}}$ of them.

Given such a pair, the probability that the face containing both of them is born before any face containing just one is of order $1/n$. Therefore the expected number of times a copy of $M'$ is created throughout the rest of the process is $O(1/\sqrt[3]{\log n})$, and so
whp none are created, as required.
\end{proof}

\begin{cor}\label{cor:lastobstruction}
With high probability, $p_M= \frac{\log n +\frac12\log \log n +O(1)}{n}$.
\end{cor}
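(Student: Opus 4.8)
The plan is to sandwich $p_M$ between $\frac{\log n+\frac12\log\log n-\omega}{n}$ and $\frac{\log n+\frac12\log\log n+\omega}{n}$ for an arbitrary slowly-growing $\omega$, and to use Lemma~\ref{lem:nomoreminimal} to pass from statements about $\cH_p$ at a fixed $p$ to statements about the random time $p_M$. Fix any $\omega\to\infty$ with $\omega=o(\log\log n)$; it suffices to show that whp
\[
\frac{\log n+\tfrac12\log\log n-\omega}{n}\;<\;p_M\;\le\;\frac{\log n+\tfrac12\log\log n+\omega}{n},
\]
which is the meaning of the corollary (equivalently, $p_M n-\log n-\tfrac12\log\log n$ is bounded in probability). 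By Lemma~\ref{lem:nomoreminimal} we have $p_M=p_M^*$ whp, so we may work with $p_M^*$ instead, and throughout we condition on the finitely many high-probability events invoked below holding simultaneously.

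For the upper bound, set $p_0=\frac{\log n+\frac12\log\log n+\omega}{n}$ and let $X$ be the number of rooted copies of $M'$ in $\cH_{p_0}$, as in Lemma~\ref{lem:secondmoment}. A direct computation gives $\EE(X)=\binom n3 3p_0(1-p_0)^{2(n-2)}=(1+o(1))\tfrac12 e^{-2\omega}=o(1)$, so by Markov's inequality whp $X=0$; in particular $\cH_{p_0}$ contains no copy of $M'$, hence no copy of $M$. Since $\cH_p$ is constant between consecutive birth times and $p_0$ exceeds the reference time $\frac{\log n+\frac14\log\log n}{n}$, the definition of $p_M^*$ then yields $p_M^*\le p_0$.

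For the lower bound, set $p_1=\frac{\log n+\frac12\log\log n-\omega}{n}$, which lies in the range of Lemma~\ref{lem:secondmoment} and satisfies $p_1\ge n^{-1-\eps}$. That lemma gives, whp, a rooted copy of $M'$ in $\cH_{p_1}$: a face $\{a,b,c\}$ in which $ab$ and $ac$ lie in no other face. I would upgrade this to a copy of $M$ via Lemma~\ref{lem:denseshadow}: the shadow graph only gains edges as $p$ increases, so whp every pair of vertices---in particular $a,b$---is joined by at least $\sqrt n$ paths of length $2$ in the shadow graph of $\cH_{p_1}$. At most one of these has midpoint $c$, so at least one has a midpoint $w\notin\{a,b,c\}$; its edges $aw,wb$ are then distinct from $ab$ and $ac$, giving a valid path $P_{ab}$, so $\{a,b,c\}$ forms a copy of $M$. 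By Lemma~\ref{lem:nomoreminimal}, $\cH_p$ is $M'$-free for every $p\ge p_M^*$, so the presence of a copy of $M$ in $\cH_{p_1}$ forces $p_M^*>p_1$. Combining the two bounds with $p_M=p_M^*$ gives the claim.

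The genuinely delicate work---the second-moment concentration of the number of minimal obstructions throughout the subcritical window, and the fact that $M$ does not reappear after the last copy vanishes---has already been done in Lemmas~\ref{lem:secondmoment} and~\ref{lem:nomoreminimal}, so the corollary is essentially an assembly. Accordingly, the main point to get right is the transfer between a fixed probability and the random stopping time $p_M$: because ``$M\subset\cH$'' is not monotone, this is exactly where the no-reappearance statement of Lemma~\ref{lem:nomoreminimal} is indispensable. The only other subtlety is that Lemma~\ref{lem:secondmoment} produces merely a copy of $M'$, so one must verify the extra path of $M$ can always be chosen to avoid $c$ and the edges $ab,ac$; the abundance of short paths from Lemma~\ref{lem:denseshadow} makes this immediate.
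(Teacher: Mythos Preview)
Your proof is correct and follows essentially the same approach as the paper: a first-moment bound on rooted copies of $M'$ for the upper bound on $p_M^*$, Lemma~\ref{lem:secondmoment} together with Lemma~\ref{lem:denseshadow} to exhibit a copy of $M$ at the lower threshold, and Lemma~\ref{lem:nomoreminimal} to identify $p_M^*$ with $p_M$. The only cosmetic difference is that for the lower bound the paper deduces $p_M>p_1$ directly from the definition of $p_M$ (an $M$ present at $p_1$ has not yet disappeared), whereas you route the inequality through $p_M^*$; both are fine.
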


\begin{proof}
Whp we have $p_M\ge \frac{\log n + \frac12 \log \log n -\omega}{n}$ for any $\omega \to \infty$ by Lemma~\ref{lem:denseshadow} and Lemma~\ref{lem:secondmoment}. On the other hand if $p= \frac{\log n + \frac12 \log \log n +\omega}{n}$, then the expected number of rooted copies of $M'$ is
\begin{align*}
\binom{n}{3}3p(1-p)^{2(n-2)} & \le (1+o(1)) \frac{n^3p}{2} \exp(-2pn)\\
& = (1+o(1))\frac{n^2\log n}{2} \exp \left( -2\log n - \log\log n -2\omega \right)\\
& \le \exp (-2\omega)=o(1)
\end{align*}
and so by Markov's inequality, whp
$p_M^* \le \frac{\log n + \frac12 \log \log n +\omega}{n}$,
and by Lemma~\ref{lem:nomoreminimal} we have $p_M^*=p_M$ whp, completing the argument.
\end{proof}

\section{The subcritical case}\label{sec:subcritical}
In this section we will prove the first statements of Theorems~\ref{thm:binomialthreshold} and~\ref{thm:hittingtime}.
Unlike many other similar results on connectivity in random graphs or hypergraphs,
the subcritical case is far from trivial. The reason for this is that \connectivity\ 
is not a monotone property, and therefore it is not enough to prove that $\cH_p$
is not \connected\ at time $p=\frac{\log n+\frac12 \log \log n-\omega}{n}$. Rather
we have to prove that whp $\cH_p$ is not \connected\ at every $p$ up to and including this one.

We begin by proving that whp $\cH_p$ becomes topologically connected
at the moment when the last isolated vertex disappears, and that this
occurs at around $p=\frac{2\log n}{n^2}$. The proof is a simple
adaptation of the corresponding result for graphs, and indeed follows
as a special case of previously proved hypergraph results in~\cite{Poole14}
and~\cite{CooleyKangKoch15b}, but we reprove it here for completeness.

\begin{lem}\label{lem:isolated}
Let $\delta>0$ be constant. With high probability $\cH_p$ contains isolated vertices if $p\le (2-\delta) \frac{\log n}{n^2}$. In particular, $\cH_p$ is not topologically connected and therefore also not \connected.

On the other hand, with high probability $\cH_p$ contains no isolated vertices if $p\ge (2+\delta)\frac{\log n}{n^2}$.
\end{lem}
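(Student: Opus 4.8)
The plan is to prove this standard first/second moment result about isolated vertices in $\cH_p$, which mirrors the classical Erd\H{o}s--R\'enyi argument for $G(n,p)$. Throughout, let $X$ denote the number of isolated vertices, i.e.\ vertices contained in no edge of the hypergraph $H^3(n,p)$. The probability that a fixed vertex $v$ is isolated is the probability that none of the $\binom{n-1}{2}$ triples through $v$ is a face, which is $(1-p)^{\binom{n-1}{2}}$. Hence $\EE(X) = n(1-p)^{\binom{n-1}{2}}$, and writing $\binom{n-1}{2} = (1+o(1))\frac{n^2}{2}$ and using $\log(1-p) = -p - O(p^2)$ with $p = \Theta(\log n / n^2)$ (so that $p^2 n^2 = o(1)$), we get $\EE(X) = n\exp\big(-(1+o(1))\frac{pn^2}{2}\big)$.

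First I would handle the subcritical side. If $p \le (2-\delta)\frac{\log n}{n^2}$, then $\EE(X) \ge n\exp\big(-(1+o(1))(1-\delta/2)\log n\big) = n^{\delta/2 - o(1)} \to \infty$. This alone does not give the claim, so I need the second moment. I would compute $\EE(X^2) = \EE(X) + n(n-1)\Pr(u,v\text{ both isolated})$, where for distinct $u,v$ the probability that both are isolated is $(1-p)^{2\binom{n-1}{2} - (n-2)}$, since the $n-2$ triples containing both $u$ and $v$ are counted in both vertices' neighbourhoods. Thus $\Pr(u,v\text{ both isolated}) = (1-p)^{-(n-2)}\Pr(u\text{ isolated})\Pr(v\text{ isolated}) = (1+o(1))\Pr(u\text{ isolated})\Pr(v\text{ isolated})$, because $(1-p)^{-(n-2)} = 1+o(1)$ when $pn = o(1)$, which holds here. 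Therefore $\EE(X^2) = (1+o(1))\EE(X)^2 + \EE(X) = (1+o(1))\EE(X)^2$, and Chebyshev's inequality (or the second moment method) gives $X > 0$ whp, so $\cH_p$ has an isolated vertex and is in particular not topologically connected, hence not \connected.

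For the supercritical side, if $p \ge (2+\delta)\frac{\log n}{n^2}$, then $\EE(X) \le n\exp\big(-(1+o(1))(1+\delta/2)\log n\big) = n^{-\delta/2 + o(1)} \to 0$, and Markov's inequality gives $\Pr(X \ge 1) \le \EE(X) = o(1)$, so whp there are no isolated vertices. The one point requiring a little care is the uniformity of the $o(1)$ terms: both bounds on $p$ are of order $\log n / n^2$, so $p^2 n^2 = O((\log n)^2/n^2) = o(1)$ uniformly, and likewise $pn = O(\log n / n) = o(1)$, so all the approximations above are legitimate. I do not expect any genuine obstacle here — the main (minor) subtlety is simply keeping track of the $(n-2)$-triple overlap correction in the second moment computation and verifying it is negligible, which it is precisely because $p$ is so small in this regime. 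This lemma is a routine warm-up compared with the non-monotone difficulties that dominate the rest of the subcritical analysis.
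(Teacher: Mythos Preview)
Your proof is correct and follows essentially the same first/second moment argument as the paper. The only cosmetic difference is that the paper explicitly invokes monotonicity of the property ``has an isolated vertex'' to reduce each statement to the boundary value of $p$, whereas you implicitly assume $p=\Theta(\log n/n^2)$ throughout your asymptotics; this is harmless for the same reason, but worth stating since the supercritical hypothesis $p\ge(2+\delta)\frac{\log n}{n^2}$ does not by itself pin down the order of $p$.
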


\begin{proof} We note that the presence of isolated vertices \emph{is} a monotone decreasing property, therefore it suffices to prove each statement for the upper or lower bound on $p$ respectively.
For the first statement, let $p=(2-\delta) \frac{\log n}{n^2}$.
The probability that a vertex is isolated is
$$(1-p)^{\binom{n-1}{2}}\ge e^{-pn^2/2-p^2n^2/2}\ge(1+o(1)) n^{-(1-\delta/2)}.$$
Thus if $X$ is the number of isolated vertices, we have $\EE(X)\ge (1+o(1))n^{\delta/2}$.
Furthermore, the probability that two distinct vertices are isolated is
$$(1-p)^{2\binom{n-1}{2}-(n-2)}\le e^{-p(n-2)^2} = n^{-(2-\delta)}e^{O(np)} \le (1+o(1))n^{-(2-\delta)}$$
and so
$$\EE(X^2) \le n(n-1)(1+o(1))n^{-(2-\delta)} + \EE(X) = (1+o(1))\EE(X)^2.$$
It follows that $\var(X) = o(\EE(X)^2)$ and
therefore by Chebyshev's inequality, whp there are isolated vertices as required.

The second statement simply follows from a first moment calculation: For $p=(2+\delta)\frac{\log n}{n^2}$ we have
$$\EE(X) = n(1-p)^{\binom{n-1}{2}} \le ne^{-pn^2/2 + pn/2} \le (1+o(1))n^{-\delta/2} = o(1)$$
so by Markov's inequality, whp there are no isolated vertices.
\end{proof}

We call the components of the shadow graph \emph{topological components},
and say that such a component is \emph{trivial} if it consists of just
one isolated vertex.

\begin{lem}\label{lem:trivialtopological}
Let $\delta>0$ be constant. With high probability, for all $\frac{\log n}{n^2}\le p \le (2+\delta)\frac{\log n}{n^2}$, there is exactly one non-trivial topological component in $\cH_p$.
\end{lem}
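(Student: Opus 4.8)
The plan is to show that whp, throughout the interval, the only non‑trivial topological component is a single ``giant'' one. Write $p_0:=\frac{\log n}{n^2}$ and $p_1:=(2+\delta)\frac{\log n}{n^2}$. Adding a face can merge topological components, or turn a set of isolated vertices into a new non‑trivial component, but never splits a component; so, as with the other non‑monotone arguments in this paper, we cannot fix one value of $p$ but must control the whole range $[p_0,p_1]$ at once. First I would note that whp $\cH_{p_0}$ — and hence $\cH_p$ for every $p\ge p_0$ — contains at least one face, since $\Pr(\cH_{p_0}\text{ has no face})=(1-p_0)^{\binom n3}\le e^{-p_0\binom n3}=e^{-\Theta(n\log n)}=o(1)$; in particular there is always at least one non‑trivial topological component. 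A non‑trivial component contains a face (every edge lies in one) and hence has at least three vertices, so it suffices to prove that whp, for every $p\in[p_0,p_1]$, no component of the shadow graph of $\cH_p$ has between $3$ and $n/2$ vertices: given this, every non‑trivial component has more than $n/2$ vertices, so there is exactly one.

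For the union bound, fix $S\subseteq[n]$ with $|S|=k\in\{3,\dots,\lfloor n/2\rfloor\}$. If $S$ is a topological component at some $p\in[p_0,p_1]$, then every vertex of $S$ is incident (at time $p$) to a face lying entirely inside $S$, so the hypergraph restricted to $S$ contains at least $t:=\lceil k/3\rceil$ faces at time $p$, and hence also at time $p_1$; and no face meets both $S$ and $[n]\setminus S$ at time $p$, and hence — since $p\ge p_0$ — none does at time $p_0$. This last step is what rescues the non‑monotone argument: a crossing face, once born, destroys the isolation of $S$ permanently, so it suffices to forbid crossing faces at the \emph{bottom} $p_0$ of the interval, whereas a naive union bound over the $\Theta(n\log n)$ birth times in $[p_0,p_1]$ (forbidding them at each $p$ separately) would be far too lossy. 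The two events ``$\ge t$ internal faces by time $p_1$'' and ``no crossing face by time $p_0$'' depend on disjoint sets of triples and are therefore independent, with probabilities at most $\binom{\binom k3}{t}p_1^{\,t}$ and $(1-p_0)^{N}\le e^{-p_0N}$ respectively, where $N=\binom k2(n-k)+k\binom{n-k}{2}$ counts the crossing triples. Hence the probability that some such $S$ ever occurs is at most $\sum_{k=3}^{\lfloor n/2\rfloor}\binom nk\binom{\binom k3}{t}p_1^{\,t}\,e^{-p_0N}$.

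It remains to check that this sum is $o(1)$, and this is the only part that needs genuine computation (it is, however, comfortable). Using $\binom nk\le(en/k)^k$, $\binom{\binom k3}{t}p_1^{\,t}\le\bigl(e\binom k3 p_1/t\bigr)^t\le\mathrm{polylog}(n)\,\bigl(e(2+\delta)k^2\log n/2n^2\bigr)^{k/3}$ and $N\ge\tfrac12 k(n-k)^2(1-o(1))$, a short calculation bounds the $k$-th term by $\mathrm{polylog}(n)\,\bigl(Cn\log n/k\bigr)^{k/3}\,e^{-(1-o(1))k(n-k)^2\log n/2n^2}$ for an absolute constant $C$. For $3\le k\le n/\log^2 n$ the exponential factor is at most $n^{-(1/2-o(1))k}$ while the power factor is $n^{(1/3+o(1))k}$, so the term is at most $n^{-k/7}$, and this part of the sum is $o(1)$; for $n/\log^2 n<k\le n/2$ the power factor is only $e^{O(k\log\log n)}$ whereas $e^{-p_0N}\le e^{-\Omega(k\log n)}$, so the term is $e^{-\Omega(n/\log n)}$ and that part is $o(1)$ as well. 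The dominant contribution is the term $k=3$ — an isolated face — which is of order $\Theta\bigl(n^3p_1e^{-3p_0n^2/2}\bigr)=\Theta(n^{-1/2}\log n)=o(1)$; so the isolated‑face threshold lies just below $p_0$, in keeping with the whole argument. Thus the balance between the number of candidate sets, the number of ways to place enough faces inside them, the cost of those faces, and the suppression coming from the absence of crossing faces works out uniformly in $k$, and the proof is complete.
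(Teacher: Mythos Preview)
Your argument is correct, and it is genuinely different from the paper's. The paper proceeds in three separate stages: first it shows that at $p_0=\frac{\log n}{n^2}$ there are whp no components of size between $3$ and $n/4$ (so at most four large components remain); then it sprinkles extra probability to merge those large components; and finally it argues separately that no \emph{new} non-trivial component is ever born in the interval, by bounding the number of isolated vertices at $p_0$ and showing that three of them are unlikely to coalesce into a fresh face before joining the main component. Your proof replaces all three stages with a single union bound over sets $S$, made uniform over the whole interval by the decoupling trick of demanding the internal faces at the \emph{top} endpoint $p_1$ and the absence of crossing faces at the \emph{bottom} endpoint $p_0$. This is cleaner and more self-contained, and it avoids the somewhat ad hoc ``isolated vertices cannot form a new triangle'' step; the price is that one loses the finer structural picture (how many large components there are at $p_0$, and how quickly they merge), though that information is not needed elsewhere in the paper. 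Your choice of $t=\lceil k/3\rceil$ internal faces, justified by vertex-covering, is slightly weaker than the paper's $(k-1)/2$ (from hypergraph connectivity), but since both are $\Theta(k)$ it makes no difference to the estimates.
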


Let us note that the constant (i.e.\ $1$) in the lower bound for $p$ is not
the optimal constant (which would in fact be $2/3$),
but this result will be strong enough for our purposes and choosing this
larger constant will make the proof significantly easier.

\begin{proof}
We first show that for $p=\frac{\log n}{n^2}$, whp there are no topological components of size $k$ for $3\le k \le n/4$ (note that a topological component of size $2$ is not possible).

So consider the expected number of topological components of size $k$. There are at most $\binom{n}{k}$ ways of choosing the vertices, and there must be at least $\tfrac{k-1}{2}\ge \tfrac{k}{3}$ faces, with at most $\binom{k}{3}^{k/3}$ ways of choosing these $k/3$ faces. Finally, any triple containing at least one vertex from these $k$ and one vertex from the remaining $n-k$ cannot be a face. Thus there must be at least $k(n-k)(n-2)/2$ non-faces. Thus the expected number of topological components of size $k$ is at most
\begin{align*}
\binom{n}{k}(1-p)^{k(n-k)(n-2)/2}p^{k/3}\binom{k}{3}^{k/3} & \stackrel{\phantom{\left(k\le\frac{n}{4}\right)}}{\le} \left(\left(\frac{en}{k}\right)^3(1-p)^{3(n-k)(n-2)/2}p\frac{k^3}{6}\right)^{k/3}\\
& \stackrel{\phantom{\left(k\le\frac{n}{4}\right)}}{\le} \left(\Theta \left( n(\log n) e^{-3p(n-k)(n-2)/2} \right) \right)^{k/3}\\
& \stackrel{\left(k\le\frac{n}{4}\right)}{\le} \left(\Theta \left( n(\log n) e^{-\frac{3}{2}\log n \cdot \frac{3}{4}(1+o(1))} \right) \right)^{k/3}\\
& \stackrel{\phantom{\left(k\le\frac{n}{4}\right)}}{\le} \left(\Theta \left( n(\log n) n^{-10/9} \right) \right)^{k/3}\\
& \stackrel{\phantom{\left(k\le\frac{n}{4}\right)}}{\le} n^{-k/30}.
\end{align*}
Thus the probability that there is any topological component of size between $3$ and $n/4$ is at most
\[
\sum_{k=3}^{n/4} n^{-k/30} \le n^{-1/11}=o(1).
\]
Thus at time $p=\frac{\log n}{n^2}$, whp we have at most four non-trivial
topological components, each of size at least $n/4$. We now show that
within the time interval
$\frac{\log n}{n^2} \le p \le (1+\delta)\frac{\log n}{n^2}$, these
components will merge together whp. The probability that two such
components do not merge once we add the additional probability
of (at least) $p'=\delta\cdot\frac{\log n}{n^2}$ is at most
\[
(1-p')^{\frac{n^2}{16}\cdot \frac{n-2}{2}} \le \exp \left(-p'n^3/33\right) =\exp \left( -\frac{\delta}{33}n\log n\right) = o(1)
\]
and since we have a bounded number of topological components, whp they all merge together.

Finally, we need to show that from time $p=\frac{\log n}{n^2}$ onwards we don't create
any more non-trivial topological components. We observe that if any such component is created
in the process, then at the time that it first becomes non-trivial it will have size 3. Therefore it is enough to
show that whp, from time $p=\frac{\log n}{n^2}$ onwards
no components of size $3$ are created from previously isolated vertices.

Let us first consider the number of isolated vertices at time $p=\frac{\log n}{n^2}$. The expected number is
\[
n(1-p)^{\binom{n-1}{2}} \le n\exp\left(-p\frac{(n-2)^2}{2}\right) = n\exp \left(-\left(\tfrac12+o(1)\right)\log n\right)\le n^{\frac12+o(1)}.
\]
Thus by Markov's inequality, whp there are at most $n^{3/5}$ isolated vertices.

Now conditioned on having at most $n^{3/5}$ isolated vertices at time $p=\frac{\log n}{n^2}$,
let us denote the set of isolated vertices by $X$. To create a new topological component of size $3$, a face
containing three vertices of $X$ would have to be born before any of the faces containing one of these
vertices and vertices not from $X$. For any one vertex of $x\in X$, the number of faces containing $x$ and
two other vertices of $X$ is at most $n^{6/5}$, and so the probability of one of these faces being the next
to be born is $O(n^{6/5}/n^2)=O(n^{-4/5})$. Taking a union bound over all $O(n^{3/5})$ vertices of $X$,
the probability of creating any topological component of size $3$ is at most $O(n^{-1/5}) = o(1)$.
\end{proof}

From Lemmas~\ref{lem:isolated} and~\ref{lem:trivialtopological} we immediately deduce the value
of the birth time $p_T$ of the face that makes the complex topologically connected.

\begin{cor}\label{cor:topologicallyconn}
  With high probability, $p_T=(2+o(1))\frac{\log n}{n}$.
\end{cor}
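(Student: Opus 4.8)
The plan is to obtain $p_T$ as an immediate consequence of Lemmas~\ref{lem:isolated} and~\ref{lem:trivialtopological}. The starting observation is that topological connectivity of $\cH_p$ is a monotone \emph{increasing} property of the face set --- adding a face only adds edges to the shadow graph and deletes no vertex, so a connected shadow graph stays connected --- and hence $p_T$, the birth time of the face that makes $\cH_p$ topologically connected, is well defined and equals the unique threshold at which the shadow graph becomes connected. Dually, ``$\cH_p$ contains an isolated vertex'' is a monotone \emph{decreasing} property of the face set.

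First I would fix an arbitrary constant $0<\delta<1$ and work in the window $W_\delta:=[(2-\delta)\tfrac{\log n}{n^2},\,(2+\delta)\tfrac{\log n}{n^2}]$, which (since $\delta<1$) lies inside the range $[\tfrac{\log n}{n^2},(2+\delta)\tfrac{\log n}{n^2}]$ of Lemma~\ref{lem:trivialtopological}. By that lemma, whp for every $p\in W_\delta$ the complex $\cH_p$ has exactly one non-trivial topological component, so its only possible further topological components are isolated vertices; therefore, throughout $W_\delta$, the statement ``$\cH_p$ is topologically connected'' is \emph{equivalent} to ``$\cH_p$ has no isolated vertex''. Next, Lemma~\ref{lem:isolated} gives that whp $\cH_p$ has an isolated vertex at $p=(2-\delta)\tfrac{\log n}{n^2}$ and has none at $p=(2+\delta)\tfrac{\log n}{n^2}$. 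Feeding these two facts into the equivalence and using the two monotonicities from the first paragraph, whp $\cH_p$ fails to be topologically connected for all $p\le(2-\delta)\tfrac{\log n}{n^2}$ and is topologically connected for all $p\ge(2+\delta)\tfrac{\log n}{n^2}$, whence $(2-\delta)\tfrac{\log n}{n^2}\le p_T\le(2+\delta)\tfrac{\log n}{n^2}$ whp. As $\delta\in(0,1)$ was an arbitrary constant, this is exactly the assertion of Corollary~\ref{cor:topologicallyconn}, namely $p_T=(2+o(1))\tfrac{\log n}{n^2}$.

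There is no real obstacle here: the only step doing genuine work is the equivalence ``topologically connected $\iff$ no isolated vertex'' \emph{within the critical window}, and this rests entirely on Lemma~\ref{lem:trivialtopological} to rule out topological components of intermediate size; without that input one could not identify the disappearance of the last isolated vertex with the onset of topological connectivity, and hence could not pin down $p_T$. Everything else is the intersection of two high-probability events together with the monotonicity bookkeeping, and I would note that a single fixed $\delta$ suffices for every invocation of the lemmas, so no limiting procedure in $\delta$ enters --- the stated conclusion is precisely the assertion that $n^2p_T/\log n\to 2$ in probability.
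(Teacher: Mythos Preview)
Your argument is correct and is exactly the one the paper intends: the authors simply write that the value of $p_T$ ``follows from Lemmas~\ref{lem:isolated} and~\ref{lem:trivialtopological}'', and you have spelled out precisely how, using the monotonicity of topological connectivity together with the fact that inside the window $W_\delta$ the only possible extra components are isolated vertices. (Note also that you have silently corrected a typo in the statement: the paper's overview and the lemmas make clear that the intended value is $(2+o(1))\tfrac{\log n}{n^2}$, not $\tfrac{\log n}{n}$.)
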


Recall that we split the range from $p_T$ to $p_M$, the moment when the last copy of $M$ disappears, into four intervals $I_1,\ldots,I_4$, and aim to show that whp for each $1\le i \le 4$, there is a copy $M_i$ of $M$ which remains in place throughout the interval $I_i$.
Also recall that $\eps$ is a fixed constant with $0<\eps<\frac1{10}$.

\begin{lem}\label{lem:firstobstruction}
With high probability, at the moment the shadow graph becomes connected, the face which we just added forms a copy $M_1$ of $M$, and remains an $M$ until $p_1=n^{-1-\eps}$.
\end{lem}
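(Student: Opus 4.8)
The plan is to pin down the face $f$ whose addition makes the shadow graph connected and then to track it. By Lemmas~\ref{lem:isolated} and~\ref{lem:trivialtopological}, whp the shadow graph of $\cH_p$ consists, for every $p$ in the relevant range, of a single non-trivial topological component together with some isolated vertices; hence it becomes connected exactly when the last isolated vertex is absorbed, at a random time $p_T$ which whp lies in the window $W_n:=\bigl[(2-\delta)\tfrac{\log n}{n^2},\,(2+\delta)\tfrac{\log n}{n^2}\bigr]$ for a fixed $\delta\in(0,1)$ (say $\delta=\tfrac12$). The absorbing face $f$ must contain every vertex that was isolated just beforehand and must also meet the non-trivial component, so $f=\{x,y,z\}$ where either exactly one of $x,y,z$ is isolated just before $f$ is added, or exactly two are.

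The crux is two first-moment estimates. Since ``forming (part of) a copy of $M$'' is not monotone in $p$ (at the start of the process there genuinely are many vertices lying in exactly one face), these cannot be run as naive union bounds over all $p$; instead I localise everything to the short window $W_n$ that whp contains $p_T$. First, to rule out $f$ having two isolated vertices, I bound the expected number of faces born during $W_n$ having two of their three vertices in no earlier face: forcing two vertices into no earlier face costs a factor of order $(1-p)^{2\binom{n-1}{2}-O(n)}=n^{-2+o(1)}$ at $p\asymp\tfrac{\log n}{n^2}$, which, together with the window length $\Theta(\tfrac{\log n}{n^2})$, beats the $\binom n3$ choices, so this expectation is $o(1)$. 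Hence whp $f=\{x,y,z\}$ with $z$ the unique vertex isolated just before $f$, so that $x,y$ lie in the non-trivial component and the shadow graph immediately before time $p_T$ is connected on $[n]\setminus\{z\}$. Second, whp the edge $xy$ lies in no face of $\cH_{p_T}$ other than $f$ (the edge $xz$ trivially lies in no other face, as $z$ was isolated until $f$ was born): again localising to $W_n$, the expected number of faces $\{x,y,z\}$ born in $W_n$ with $z$ in no earlier face but $\{x,y\}$ already an edge is $o(1)$, the extra factor of order $np=o(1)$ for $\{x,y\}$ being an edge more than compensating; alternatively one conditions on the hypergraph induced on $[n]\setminus\{z\}$ (which has $O(n\log n)$ shadow edges at time $p_T$ by Claim~\ref{claim:faces}) and uses that the pair $\{x,y\}$ in $z$'s first face is then uniform over $\binom{[n]\setminus\{z\}}{2}$, so the probability it is already an edge is $O(\tfrac{\log n}{n})=o(1)$.

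Granting this, $f$ is a copy of $M$ at time $p_T$: put $a:=x$, $b:=y$, $c:=z$, so the edges $ab=xy$ and $ac=xz$ lie in no face other than $f$; and since the shadow graph immediately before $p_T$ is connected on $[n]\setminus\{z\}$ (hence contains both $x$ and $y$) and contains neither of the edges $xy$ and $xz$, any $x$--$y$ path in it serves as the required path $P_{ab}$.

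For persistence up to $p_1=n^{-1-\eps}$, note first that whp $p_T\le(2+\delta)\tfrac{\log n}{n^2}\ll n^{-1-\eps}=p_1$, as $\eps<1$. Conditioning on the configuration at time $p_T$ (which in particular determines $x,y,z$ and records that $xy$ and $xz$ lie in no face other than $f$), the expected number of faces of the form $\{x,y,w\}$ or $\{x,z,w\}$ with $w\notin\{y,z\}$ born during $(p_T,p_1)$ is at most $2n\cdot\tfrac{p_1-p_T}{1-p_T}=O(np_1)=O(n^{-\eps})=o(1)$, so whp no such face appears and the edges $xy,xz$ remain contained in no face other than $f$ throughout $[p_T,p_1)$; the path $P_{ab}$ persists because edges are never deleted. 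Hence $f$ remains a copy of $M$ (with this choice of $a,b,c$) throughout $I_1$, and we set $M_1:=f$. The main obstacle throughout is precisely the non-monotonicity: one must show that the \emph{particular} face realising connectivity at the \emph{particular} random time $p_T$ is an $M$, which is what forces the window-localised first-moment bounds in place of a union bound over $p$.
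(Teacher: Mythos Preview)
Your proof is correct and follows essentially the same strategy as the paper: identify the face $f$ that absorbs the last isolated vertex, show its two relevant edges lie in no other face, exhibit the path between the two non-isolated vertices, and then sprinkle up to $p_1=n^{-1-\eps}$ to get persistence. The only notable difference is in how you rule out $f$ containing two isolated vertices: the paper uses a short conditioning argument (at the moment the number of isolated vertices drops to at most two, the chance the next relevant face merges both is $O(1/n)$), whereas you run a window-localised first-moment bound over all faces born in $W_n$ with two previously-isolated vertices. Both work; the paper's version is slightly cleaner, while yours has the virtue of being uniform with your treatment of the second step. One minor imprecision: the isolation cost at the lower edge of $W_n$ is $n^{-(2-\delta)+o(1)}$ rather than $n^{-2+o(1)}$, but with your choice $\delta=\tfrac12$ the final bound is still $o(1)$, so the conclusion is unaffected.
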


\begin{proof}
By Lemmas~\ref{lem:isolated} and~\ref{lem:trivialtopological}, whp the shadow graph becomes connected when the last isolated vertex disappears, and this occurs at time about $\frac{2\log n}{n^2}$.

We claim that whp, at the moment the shadow graph becomes connected, we only connected one isolated vertex rather than $2$. For conditioned on the moment when the number of isolated vertices becomes at most $2$, if there is only one left, then the claim follows, but if there are two, then the probability that the next face containing one of them contains both is $\frac{n-2}{2\binom{n-2}{2}}=O(1/n)$.

Therefore whp we had just one isolated vertex $x$ before adding the face $e=xyz$. Then $yz$ is distributed uniformly at random in $V-x$, and the probability that the edge $yz$ was already in a face is at most $np =O\left(\frac{\log n}{n}\right)$ (conditioned on the high probability event that $p\le \frac{(2+\delta) \log n}{n^2}$).

However, $y,z$ were in a connected component before we added the face $e$,
therefore there is a $yz$-path in the shadow graph not using this face. Thus $xyz$ forms a copy $M_1$ of $M$.

It remains to show that $M_1$ remains an $M$ for a long time. This is certainly the case for as long as $xy$ and $yz$ are contained in no further faces. Sprinkling a further probability of (at most) $n^{-1-\eps}$ means that the probability of finding another face containing one of these edges is at most $2n\cdot n^{-1-\eps}=o(1)$, as required.
\end{proof}

\begin{lem}\label{lem:obstructionintervals}
With high probability there are copies $M_2,M_3,M_4$ of $M$ which remain in place for the following time ranges
\begin{itemize}
\item $M_2$ for $n^{-1-\eps} \le p \le \frac{\log n}{10n}$;
\item $M_3$ for $\frac{\log n}{10n} \le p \le \frac{\log n}{n}$;
\item $M_4$ for $\frac{\log n}{n}\le p \le p_M$ (i.e. $M_4$ is the last copy of $M$ to disappear).
\end{itemize}
\end{lem}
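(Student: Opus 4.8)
The plan is to handle $M_2$, $M_3$ and $M_4$ by the same two-step mechanism, exploiting that for $p\ge n^{-1-\eps}$ the path in the definition of $M$ comes essentially for free. Fix an interval $I=[\alpha,\beta]$ with $n^{-1-\eps}\le\alpha<\beta$. Since ``$ab$ is contained in no other face'' is a decreasing property of $p$, it is enough to show that whp some face $abc$ of $\cH_\alpha$ has the property that neither $ab$ nor $ac$ lies in any further face of $\cH_\beta$: such a face $abc$ is then present throughout $I$, and has $ab,ac$ in no further face throughout $I$. Moreover, by Lemma~\ref{lem:denseshadow}, whose conclusion is monotone increasing in $p$, whp at every $p\in I$ there are at least $\sqrt n$ paths of length $2$ between $a$ and $b$ in the shadow graph, at most one of which passes through $c$; any of the remaining ones is a path $P_{ab}$ avoiding both $ab$ and $ac$. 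Hence such a face forms a copy of $M$ at every $p\in I$, and we take it to be $M_i$. Since a copy of $M$ gives a bad $0$-$1$ function on the edges, this copy witnesses that $\cH_p$ is not \connected\ throughout $I$.

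It thus remains to count. Let $X=X(\alpha,\beta)$ be the number of rooted triples $(a;b,c)$ for which $abc$ has birth time at most $\alpha$ while all $2(n-3)$ triples $abv$ and $acv$ with $v\notin\{a,b,c\}$ have birth time greater than $\beta$. These conditions involve pairwise distinct birth times, so
\[
\EE(X)=3\binom n3\,\alpha\,(1-\beta)^{2(n-3)}=(1+o(1))\,\tfrac{n^3\alpha}{2}\,e^{-2\beta n}.
\]
For $M_2$ (so $\alpha=n^{-1-\eps}$, $\beta=\frac{\log n}{10n}$) this is $(1+o(1))\tfrac12 n^{9/5-\eps}$, and for $M_3$ (so $\alpha=\frac{\log n}{10n}$, $\beta=\frac{\log n}{n}$) it is $(1+o(1))\tfrac{\log n}{20}$; both tend to infinity. (This is also why the range must be split into several pieces: the survival probability of a fixed candidate face decays like $e^{-2\beta n}$, so $\beta$ can exceed the logarithm of the number of candidates only by a bounded amount, which forces the choice of split points $n^{-1-\eps}$, $\frac{\log n}{10n}$, $\frac{\log n}{n}$.) A second moment computation along the lines of the proof of Lemma~\ref{lem:secondmoment} then gives $X\ge1$ whp: the pairs of disjoint rooted triples are independent and contribute $(1+o(1))\EE(X)^2$ to $\EE(X^2)$, all other pairs contribute $o(\EE(X)^2)$, so $\var(X)=o(\EE(X)^2)$ and Chebyshev applies. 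This proves the statements for $M_2$ and $M_3$.

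The remaining and genuinely delicate point is $M_4$, whose interval $[\frac{\log n}{n},p_M]$ has a \emph{random} right endpoint. I would first run the mechanism above with the \emph{fixed} time $\beta=t_1:=\frac{\log n+\frac12\log\log n-\omega}{n}$, where $\omega=\omega(n)\to\infty$ is chosen with $\omega=o(\log\log n)$; then $\EE(X)=(1+o(1))\tfrac12 e^{2\omega}\to\infty$, so whp some face forms a copy $M_4$ of $M$ at every $p\in[\frac{\log n}{n},t_1]$. To cover the rest of the interval, note that whp $p_M\ge t_1$ (by Corollary~\ref{cor:lastobstruction}, via Lemma~\ref{lem:secondmoment}), and that $t_1>\frac{\log n+\frac14\log\log n}{n}$; hence by the identity $p_M=p_M^*$ (Lemma~\ref{lem:nomoreminimal}) and the definition of $p_M^*$ (together with the analysis of Section~\ref{sec:minimal}), for every $p\in[t_1,p_M)$ the complex $\cH_p$ contains \emph{some} copy of $M$. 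Combining the two ranges, whp $\cH_p$ contains a copy of $M$, and hence is not \connected, for every $p\in[\frac{\log n}{n},p_M)$, which is what the subcritical argument needs. (If one insists on a single literal copy $M_4$ valid on all of $[\frac{\log n}{n},p_M]$, one can instead let $M_4$ be the last copy of $M$ to disappear; the bound from the proof of Lemma~\ref{lem:nomoreminimal}, that whp no copy of $M'$ is created after time $\frac{\log n+\frac14\log\log n}{n}$, forces the face of $M_4$ to have been born before $\frac{\log n}{n}$.)

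All of the above uses only a bounded number of first/second moment estimates, together with Claim~\ref{claim:faces} when translating between face counts and probabilities, so a union bound over the $o(1)$ error terms shows that $M_2$, $M_3$ and $M_4$ exist with the stated lifespans simultaneously whp.
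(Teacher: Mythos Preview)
Your treatment of $M_2$ and $M_3$ is correct but organised differently from the paper. You set up a two--time random variable $X(\alpha,\beta)$ directly and run a second--moment argument on it. The paper instead applies Lemma~\ref{lem:secondmoment} only at the \emph{end} of the interval to get many copies of $M'$ at time $\beta$, and then uses the elementary observation that, conditionally on a face being present at time $\beta$, its birth time is uniform on $[0,\beta]$; with many copies at $\beta$, at least one had its face born by $\alpha$. Your route is a little more self--contained (it does not rely on re--quoting Lemma~\ref{lem:secondmoment}), while the paper's route avoids redoing the variance calculation. Both are fine.

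For $M_4$ there is a genuine gap. Your main argument produces a copy of $M$ on $[\tfrac{\log n}{n},t_1]$ and then only ``some copy of $M$'' on $[t_1,p_M)$, which suffices for the subcritical statement of the main theorems but does not prove the lemma as stated (a \emph{single} $M_4$ throughout). Your parenthetical attempt to repair this is incorrect: the proof of Lemma~\ref{lem:nomoreminimal} shows that whp no copy of $M'$ is created after time $\frac{\log n+\frac14\log\log n}{n}$, which is \emph{larger} than $p_3=\frac{\log n}{n}$; it therefore does not force the face of the last $M$ to be born before $p_3$.

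The paper closes this gap with the same ``uniform birth time'' idea used for $M_2$ and $M_3$, but now applied to a single copy. Take $M_4$ to be the last copy of $M$ to disappear, with face $abc$ and death time $D=p_M$. The event ``$(a;bc)$ is the last $M'$'' decomposes as $\{\,\text{birth}(abc)<D\,\}$ intersected with an event determined by the other birth times (the constraints on other rooted triples either do not involve $\text{birth}(abc)$ or are automatically satisfied once $\text{birth}(abc)<D$). Hence, conditionally on being the last $M'$, $\text{birth}(abc)$ is uniform on $[0,p_M]$, and since whp $p_3=(1-o(1))p_M$ by Corollary~\ref{cor:lastobstruction}, we get $\text{birth}(abc)\le p_3$ whp. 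Combined with Lemma~\ref{lem:denseshadow}, this yields a single $M_4$ valid on all of $[\tfrac{\log n}{n},p_M]$.
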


\begin{proof}
By Lemma~\ref{lem:secondmoment}, at time $p_2=\frac{\log n}{10n}$ there are at least
\begin{align*}
 \frac{n^3p}{3} \exp(-2pn) & = \Theta \left(n^2\log n \exp \left(-\frac{\log n}{5}\right)\right)\\
& \ge n^{9/5}
\end{align*}
copies of $M'$. By Lemma~\ref{lem:denseshadow}, whp for $p\ge n^{-1-\eps}$, such a triple will always have been a copy of $M$ provided the corresponding face exists. We therefore need to show that whp, at least one of these faces already existed at time $p_1=n^{-1-\eps}$.

To do this, observe that given that these faces exist at time $p_2$, their
birth times are uniformly distributed in $[0,\frac{\log n}{10n}]$. The probability that any fixed
such face existed at time $p_1 \ge \frac{1}{n^{2\eps}}\frac{\log n}{10n}$ is at least
$1-n^{-2\eps}$. Thus, the probability that none of them existed at time $p_1$ is at most
\[
(1-n^{-2\eps})^{n^{9/5}} \stackrel{\left(\eps<\frac1{10}\right)}{\le} e^{-n^{8/5}}=o(1)
\]
as required.

An essentially identical argument also shows that whp there is a minimal obstruction
throughout $p\in [\frac{\log n}{10n},\frac{\log n}{n}]$
(since at time $p_3= \frac{\log n}{n}$ whp we have a growing number of copies of $M$ by Lemma~\ref{lem:secondmoment}),
and that the final minimal obstruction to disappear, at time $p_M = \frac{\log n +\frac12 \log \log n +O(1)}{n}$ already existed at time $p_3=(1-o(1))p_M$.
\end{proof}

Together, Lemmas~\ref{lem:firstobstruction} and~\ref{lem:obstructionintervals} prove the subcritical case of Theorem~\ref{thm:hittingtime}, and together with Corollary~\ref{cor:lastobstruction}, this proves the subcritical case of Theorem~\ref{thm:binomialthreshold}.

\section{The supercritical case}\label{sec:supercritical}

In this section we prove the supercritical cases of the two main theorems. We need the following definition.

\begin{definition}\label{def:superconnected}
  An edge set in a $2$-complex $\cH$ is called \emph{super-connected} if it cannot be
  partitioned into two non-empty sets such that every face of $\cH$ has edges in
  at most one of the two sets.
\end{definition}

Note that an alternative and equivalent definition of super-connectedness
comes from considering an auxiliary graph $G$ whose vertices are the edges
of $\cH$ and with two such vertices connected by an edge if the corresponding
edges lie in a common face of $\cH$. Then a super-connected set of edges in
$\cH$ corresponds to a set of vertices in $G$ which induces a connected
subgraph.

\begin{lem}\label{lem:minimalbad}
  Let $\cH$ be an arbitrary $2$-complex and let $F$ be an edge set
  in $\cH$ that is the support of a bad 0-1
  function and smallest possible with that property. Then
  \begin{enumerate}
  \item\label{minimalbad:degree}
    every vertex of $\cH$ has degree less than $\frac{n}{2}$ in $F$ and
  \item\label{minimalbad:superconnected}
    $F$ is super-connected.
  \end{enumerate}
\end{lem}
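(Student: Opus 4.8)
The plan is to prove both parts by contradiction with the minimality of $F$, working throughout in the language of $\FF_2$-cochains on $\cH$. The one tool used in both parts is the elementary observation that adding a coboundary to a bad function gives another bad function: if $f_e=\mathbf 1_F$ is even on every face boundary but is not induced by a $0$--$1$ function on the vertices, and $g_e$ is the function induced by some vertex function $g_v$ (so $g_e$ is $1$ exactly on edges whose endpoints receive different $g_v$-values), then $f_e+g_e$ is again even on every face boundary, and $f_e+g_e$ is induced by a vertex function if and only if $f_e$ is. Hence $f_e+g_e$ is bad, and its support is $F\mathbin{\triangle}\operatorname{supp}(g_e)$. By minimality of $F$ this set has size at least $|F|$, and in each of the two cases below we will instead produce a choice of $g_v$ making it strictly smaller, a contradiction.

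For part \ref{minimalbad:degree}, suppose some vertex $v$ has degree $d\ge n/2$ in $F$. Take $g_v$ to be the indicator function of $\{v\}$. Then $\operatorname{supp}(g_e)$ is precisely the set of edges of $\cH$ incident to $v$, whose size is the degree $d_\cH(v)$ of $v$ in the shadow graph, and $d_\cH(v)\le n-1$ since $\cH$ has $n$ vertices. Consequently
\[
\bigl|F\mathbin{\triangle}\operatorname{supp}(g_e)\bigr| \;=\; |F|+d_\cH(v)-2d \;\le\; |F|+(n-1)-2d \;\le\; |F|-1 \;<\; |F|,
\]
so $f_e+g_e$ is a bad function with support strictly smaller than $F$, contradicting minimality. (The same computation in fact shows $d<d_\cH(v)/2$ for every vertex $v$.)

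For part \ref{minimalbad:superconnected}, suppose $F$ is not super-connected, so $F=F_1\sqcup F_2$ with $F_1,F_2$ non-empty and every face of $\cH$ having edges in at most one of $F_1,F_2$. Each face boundary consists of three edges, and since $\mathbf 1_F$ is even on it, the face contains either $0$ or $2$ edges of $F$; in the latter case both of them lie in the same part $F_j$ by hypothesis. Hence each of $\mathbf 1_{F_1}$ and $\mathbf 1_{F_2}$ contains $0$ or $2$ edges on every face boundary, i.e.\ both are even on every face. Since $\mathbf 1_F=\mathbf 1_{F_1}+\mathbf 1_{F_2}$ over $\FF_2$ and $\mathbf 1_F$ is not induced by a vertex function, at least one of $\mathbf 1_{F_1},\mathbf 1_{F_2}$—say $\mathbf 1_{F_1}$—is not induced by a vertex function either, so it is bad. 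But $F_1\ne\emptyset$ and $|F_1|<|F|$ because $F_2\ne\emptyset$, again contradicting the minimality of $F$.

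I do not expect a genuine obstacle here: both parts are short manipulations with $\FF_2$-cocycles and coboundaries once the right coboundary (resp.\ splitting) is chosen. The only points needing a little care are keeping the degree of $v$ in $F$ distinct from its degree in the shadow graph when estimating the symmetric difference in part \ref{minimalbad:degree}, and noticing in part \ref{minimalbad:superconnected} that the defining property of super-connectedness is exactly what forces $\mathbf 1_{F_1}$ and $\mathbf 1_{F_2}$ to be cocycles individually.
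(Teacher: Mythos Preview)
Your proof is correct and follows the same strategy as the paper: for~\ref{minimalbad:degree} you take the symmetric difference with the star at $v$, and for~\ref{minimalbad:superconnected} you split $F$ along the partition witnessing the failure of super-connectedness. Your argument for~\ref{minimalbad:superconnected} is in fact slightly cleaner than the paper's: you directly observe that the partition hypothesis forces each $\mathbf 1_{F_j}$ to be a cocycle and then use that the sum of two coboundaries is a coboundary, whereas the paper argues more indirectly via the cycle characterisation of ``not induced by a vertex function'' to locate an odd face.
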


Let us note that a similar observation, but with only connected in
place of super-connected, was in~\cite{LinialMeshulam06}.

\begin{proof}
  Suppose, for a contradiction, that $F$ does not
  satisfy~\ref{minimalbad:degree} and let $v$ be a vertex of degree
  $d_F(v)\ge\frac{n}{2}$. Let $E_v$ be the set of edges of $\cH$ at $v$
  and let $F'$ be the symmetric difference of $F$ and $E_v$, i.e.\ an
  edge of $\cH$ is in $F'$ if and only if it either is in $F$ and not
  incident with $v$ or is incident with $v$ and not in $F$. By
  construction, since $F$ is the support of a bad 0-1 function, so is
  $F'$. But
  \begin{equation*}
    |F'| = |F| - d_F(v) + \big(n-1-d_F(v)\big) < |F|,
  \end{equation*}
  contradicting the minimality of $F$. This proves~\ref{minimalbad:degree}.

  Now suppose that $F$ is not super-connected and let $(F_1,F_2)$ be
  a partition witnessing this fact. By the minimality of $F$, both of
  the functions $f_1,f_2$ with support $F_1,F_2$ are not bad, i.e.\
  each $F_i$ either is odd on the boundary of some face or every
  cycle in $\cH$ meets $F_i$ in even number of edges. Since $f$ is bad,
  there is a cycle $C$ in $\cH$ that meets $F$ in an odd number of
  edges. Without loss of generality, $C$ also meets $F_1$ in an odd
  number of edges. Since $f_1$ is not bad, there is a face $\sigma$
  whose boundary meets $F_1$ in an odd number of edges. By the choice
  of $(F_1,F_2)$, the boundary of $\sigma$ avoids $F_2$ and thus meets
  $F$ in an odd number of edges, a contradiction to the fact that $f$
  is bad. This proves~\ref{minimalbad:superconnected}.
\end{proof}

\begin{lem}\label{lem:smallsupport}
  For $p=(1+o(1))\frac{\log n}{n}$, whp there is no bad
  0-1 function on the edges of $\cH_p$ with super-connected support of
  size $3\le k\le \log n$.
\end{lem}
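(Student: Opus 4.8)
The plan is a first-moment argument over all candidate bad supports of each size $k$ with $3\le k\le\log n$. Fix such a $k$. A bad support $F$ of size $k$ must, by Lemma~\ref{lem:minimalbad} (or rather, we only need the weaker consequences: $F$ is connected, even super-connected, as a subgraph of the shadow graph, and every edge of $F$ lies in at least one face of $\cH$), span a set $S$ of at most $k+1$ vertices and form a connected graph on them; there are at most $\binom{n}{k+1}\le n^{k+1}$ choices of vertex set and at most $\binom{\binom{k+1}{2}}{k}\le (k+1)^{2k}$ choices for which $k$ pairs form $F$. So the number of candidate supports is at most $n^{k+1}(k+1)^{2k}$, which for $k\le\log n$ is $n^{k+1+o(k)}$.

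Now fix a candidate support $F$ on vertex set $S$. For $F$ to be (the support of) a bad function we need every one of the $k$ edges of $F$ to lie in some face of $\cH_p$, and we need the faces incident to $F$ to be ``even'' on $F$; we will just use the first requirement, which already suffices because it is a conjunction of $k$ events that are essentially independent once we restrict attention to faces containing exactly one edge of $F$. Concretely, for each edge $uv\in F$, consider the faces $uvw$ with $w\notin S$: there are $n-|S|\ge n-(k+1)$ such triples, each a face independently with probability $p$, so the probability that \emph{none} of them is a face is $(1-p)^{n-(k+1)}\le (1-p)^{n-2\log n}$. These events, over the $k$ edges of $F$, involve pairwise disjoint sets of triples (two faces $uvw$ and $u'v'w'$ with $w,w'\notin S$ and $uv\ne u'v'$ coincide only if $\{u,v,w\}=\{u',v',w'\}$, impossible here), hence are independent, so the probability that every edge of $F$ lies in such a face is at most $\bigl(1-(1-p)^{n-2\log n}\bigr)^k$. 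Since $p=(1+o(1))\frac{\log n}{n}$ we have $(1-p)^{n-2\log n}\ge (1+o(1))e^{-pn}= n^{-1-o(1)}$, so each edge of $F$ fails to lie in a face with probability at least $n^{-1-o(1)}$, and hence
\begin{equation*}
\Pr(F\text{ is a bad support})\le\Bigl(1-n^{-1-o(1)}\Bigr)^{k}.
\end{equation*}
Wait—this last bound is close to $1$ for $k=o(n)$, so the trivial union bound does not quite close; one instead keeps the probability in the form $\bigl(1-(1-p)^{n-2\log n}\bigr)$ per edge but pairs it against the $n^{k+1}$ count more carefully. The clean way: the probability that a \emph{fixed} edge of $\cH_p$ lies in \emph{at least one} face is $1-(1-p)^{n-2}\le (1+o(1))pn=(1+o(1))\log n$... no. The correct statement is that this probability is $1-(1-p)^{n-2}$ which tends to $1-e^{-\log n}=1-o(1)$, so ``$uv$ lies in a face'' is typically true, and giving up this event is not enough. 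The genuine saving must come from the \emph{even-on-every-face} condition together with connectivity: a connected $F$ of size $k$ on $k+1$ vertices has roughly $k$ edges but the super-connectivity forces many faces incident to $F$, and each such face imposes a parity constraint. I would therefore argue: since $F$ is super-connected, the auxiliary graph $G$ (Definition~\ref{def:superconnected}) on the $k$ edges of $F$ is connected, so there are at least $k-1$ faces of $\cH_p$ each containing two edges of $F$; building $F$ edge by edge in this auxiliary-connected order, each new edge is ``paid for'' by a face through an already-chosen edge, which costs a factor of $O(np)=O(\log n)$ in probability but only $O(n)$ in the count of ways to extend. Counting this way, the expected number of bad supports of size $k$ is at most $n\cdot\bigl(C\log n\cdot\tfrac{1}{?}\bigr)^{k}$, and one checks this is $n^{-\Omega(k)}$, summable over $3\le k\le\log n$ to $o(1)$.

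The main obstacle is precisely calibrating this counting-versus-probability trade-off so the per-step factor is genuinely $<1$: a naive union bound (choose $k+1$ vertices, $k$ edges freely, demand each edge in a face) gives $n^{k+1}\cdot 1^k$ which diverges, so one must extract the parity/super-connectivity gain. I expect the argument to go through by the ``grow $F$ along a spanning tree of the auxiliary graph $G$'' strategy: the first edge of $F$ costs at most $n^2\cdot(np)$ (choose it, demand it lies in a face), and each subsequent edge, being adjacent in $G$ to a chosen edge, shares a vertex with the current vertex set and lies in a common face with an existing edge, contributing a factor $O(n)\cdot O(np)=O(n\log n)$ for the choice and face but the parity constraint on that shared face then removes a further factor, netting something like $O(\log n/n)$ per step after $k\ge 3$. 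I would make the base case $k=3$ explicit by hand (there are $O(n^3)$ triangles, $O(n^4)$ paths-of-length-three, etc., each surviving as a bad support with probability $O((\log n/n)^{3})$ or smaller), and then let the inductive/telescoping bound handle $4\le k\le\log n$.
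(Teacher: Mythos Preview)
Your proposal does not converge on a proof, and the gap is substantive. The counting of candidate supports (at most $(O(n))^{k+1}$) and the observation that super-connectivity forces $k-1$ faces each present with probability $p$ are both correct and match the paper. But these two ingredients alone give an expected count of order $n^{k+1}\cdot p^{k-1}\approx n^{2}(\log n)^{k-1}$, which diverges; your sketch acknowledges this (``does not quite close'') and then gestures at a further saving from ``the parity constraint on that shared face'' without ever saying what event this is or why it has small probability. There is no extra probabilistic content in the parity constraint on a face containing two edges of $F$: once the face exists, parity just says the third edge is not in $F$, which is a combinatorial restriction already absorbed in the support count.

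The missing idea is the complementary parity constraint on faces containing exactly \emph{one} edge of $F$. If $e\in F$ and $w$ is a vertex outside the span of $F$, then the triple $e\cup\{w\}$ meets $F$ in exactly one edge, so if it were a face it would be odd; hence it is \emph{not} a face. There are at least $k(n-k-1)$ such forbidden triples, contributing a factor $(1-p)^{k(n-k-1)}\le e^{-(1+o(1))k\log n}=n^{-(1+o(1))k}$. Combined with the $(10n)^{k+1}$ count and the $k^{k-2}p^{k-1}$ super-connectivity cost, this yields a per-$k$ bound of roughly $n^{2}\cdot\bigl((\log n)^{2}/n\bigr)^{k}$, which for $k\ge 3$ is at most $n^{-1/5}$ and sums to $o(1)$ over $3\le k\le\log n$. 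This is exactly the paper's argument. Your various attempts to get leverage from ``each edge lies in a face'' or from growing $F$ along a spanning tree of the auxiliary graph never touch this forbidden-triple event, and without it the first moment cannot be made small.
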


\begin{proof}
  For given $k\ge 3$ we calculate the number of ways of choosing a support of size $k$.
  Since the support must be connected, it covers at most $k+1$ vertices and the number
  of ways of choosing it is at most
$$
\binom{n}{k+1}\binom{\binom{k+1}{2}}{k} \le \left(\frac{en}{k+1}\right)^{k+1}\left(\frac{e(k+1)}{2}\right)^k \le (10n)^{k+1}
$$
  (This calculation was in~\cite{LinialMeshulam06}.)
  There are at most $k^{k-2}$ ways in which the support can be
  super-connected by $k-1$ faces with at least two edges in the support and
  each such face is present with probability $p$. In total, the
  probability that the chosen support is actually super-connected is
  at most $k^{k-2}p^{k-1}$. Each of the (at least) $k(n-k-1)$ triples with
  two vertices forming an edge in the support and the third being
  elsewhere is not allowed to be in $\cH_p$. Therefore, the
  probability that a bad 0-1 function as in the claim exists is
  \begin{align*}
    P_1 &\le \sum_{k=3}^{\log n}(10n)^{k+1}k^{k-2}p^{k-1}(1-p)^{k(n-k-1)}\\
    &\le  \sum_{k=3}^{\log n}\frac{10n}{k^2 p} \left(10nkpe^{-(1+o(1))\frac{\log n}{n}(n-k-1)}\right)^k\\
    &\le \sum_{k=3}^{\log n}n^2 \left(11k\frac{\log n}{n^{1+o(1)}}\right)^k\\
    &\le \sum_{k=3}^{\log n} n^2 \left(\frac{(\log n)^2}{n^{3/4}}\right)^k\\
    &\le (\log n) n^{-1/5} = o\left(1\right).\qedhere
  \end{align*}
\end{proof}

We note that a similar calculation for $P_1$ also works for $k$ up to $n^{1-\eps}$. However, we do not need this since we will cover this range with a different argument which we use for all large $k$.

For the case $k=2$, the above calculation is not strong enough, but in this case we can simply apply Corollary~\ref{cor:lastobstruction}.
Furthermore, bad functions with support of size $k=1$ are not possible, because every 0-1 function with support of size one has an odd face.

For very large $k$ (larger than $\log n$), the bound above on the number of super-connected supports becomes very weak,
so we will need a different way of counting them (we use a breadth-first search). In particular, once $k$ becomes larger
than linear, the bound $k(n-k-1)$ on the number of non-faces becomes useless and we also need a better way of counting
these. For this, we quote the following result of Linial and Meshulam.

\begin{prop}[{\cite[Proposition 2.1]{LinialMeshulam06}}]\label{prop:badtriples}
Let $G$ be any graph on $n$ vertices whose edges are the support of some smallest bad 0-1 function for a $3$-uniform hypergraph. (So in particular, $G$ has exactly one non-trivial component, maximum degree at most $n/2$ etc.) Let $B(G)$ be the number of bad triples, i.e.\ triples containing an odd number of edges of $G$.

Then $B(G)\ge \frac{1}{120}|E(G)|n$.
\end{prop}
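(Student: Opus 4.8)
The plan is to count the bad triples by a double count over pairs of vertices, and then to exploit the minimality of $G$ through a twisting argument. Write $N(x)$ for the neighbourhood of a vertex $x$ in $G$ and $d_x=|N(x)|$. Summing over all pairs $\{x,y\}$ and all vertices $w\notin\{x,y\}$ the indicator that $\{x,y,w\}$ is a bad triple counts each bad triple exactly three times, once for each of its pairs. For a fixed pair $\{x,y\}$, the triple $\{x,y,w\}$ is bad precisely when $w$ is adjacent to both or to neither of $x,y$ (if $xy\in E(G)$), or to exactly one of them (if $xy\notin E(G)$); counting the eligible $w$ gives $n-|N(x)\triangle N(y)|$ in the first case and $|N(x)\triangle N(y)|$ in the second. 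Hence
\begin{align*}
3B(G)&=\sum_{xy\in E(G)}\bigl(n-|N(x)\triangle N(y)|\bigr)+\sum_{xy\notin E(G)}|N(x)\triangle N(y)|\\
&\ge\sum_{xy\notin E(G)}|N(x)\triangle N(y)|,
\end{align*}
since every summand of the first sum is non-negative (as $N(x)\triangle N(y)\subseteq V$).

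The heart of the matter is the following consequence of minimality: for every vertex $2$-colouring $g\colon V\to\FF_2$ with colour classes $A,B$, at most $\tfrac12|A|\,|B|$ edges of $G$ run between $A$ and $B$. Indeed, let $\delta g$ be the edge function induced by $g$, so $\delta g$ is $1$ on precisely the shadow-graph edges that cross the cut. Adding $\delta g$ to $f$ changes neither the property of being even on the boundary of every face nor that of being induced by a vertex colouring, so $f+\delta g$ is again bad; its support is $E(G)$ symmetric-differenced with the set of shadow edges across the cut, so its size exceeds $|E(G)|$ by (the number of shadow edges across the cut that are not edges of $G$) minus (the number of edges of $G$ across the cut). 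Minimality makes this difference non-negative, so at most half of the shadow edges across the cut --- hence at most $\tfrac12|A|\,|B|$ of all pairs across the cut --- are edges of $G$. (The degree bound of Lemma~\ref{lem:minimalbad} is the case $A=\{v\}$; the only extra care needed is that the shadow graph may be incomplete in our model, but we use only the bound $|A|\,|B|$ on the number of crossing pairs.) I would then apply this with $A=N(w)$, $B=V\setminus N(w)$ for each vertex $w$: exactly $d_w(n-d_w)$ pairs have one endpoint in $N(w)$ and one outside, of which at most half are edges of $G$, so at least $\tfrac12 d_w(n-d_w)$ are non-edges. Summing the identity $|N(x)\triangle N(y)|=\#\{w:\text{exactly one of }x,y\text{ lies in }N(w)\}$ over the non-edges $xy$ and interchanging the order of summation gives $\sum_{xy\notin E(G)}|N(x)\triangle N(y)|\ge\tfrac12\sum_w d_w(n-d_w)$.

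Finally, since $G$ has maximum degree at most $n/2$, we have $n-d_w\ge n/2$ for every $w$, so $\sum_w d_w(n-d_w)\ge\tfrac n2\sum_w d_w=n\,|E(G)|$. Chaining the three estimates gives $3B(G)\ge\tfrac12\,n\,|E(G)|$, i.e.\ $B(G)\ge\tfrac16\,n\,|E(G)|$, comfortably more than is claimed. I expect the middle paragraph to be the only delicate step: one must be sure that twisting the bad function by an induced edge function again yields a bad function, and keep the support-size bookkeeping straight. The other two steps are elementary counting, and the gap between the constant $\tfrac16$ obtained this way and the stated $\tfrac1{120}$ leaves ample room for a coarser but shorter version of the argument.
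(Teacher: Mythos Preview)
The paper does not supply its own proof of this proposition; it is quoted from Linial and Meshulam and used as a black box in the proof of Lemma~\ref{lem:largesupport}. So there is no in-paper argument to compare against.

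Your argument is correct and in fact delivers the stronger constant $\tfrac16$. All three ingredients check out. The identity
\[
3B(G)=\sum_{xy\in E(G)}\bigl(n-|N(x)\triangle N(y)|\bigr)+\sum_{xy\notin E(G)}|N(x)\triangle N(y)|
\]
is exact (the off-by-two terms from excluding $w\in\{x,y\}$ cancel precisely, since when $xy\in E(G)$ both $x$ and $y$ lie in $N(x)\triangle N(y)$, and when $xy\notin E(G)$ neither does). The switching inequality --- that at most $\tfrac12|A|\,|B|$ edges of $G$ cross any bipartition $(A,B)$ --- follows from minimality exactly as you say: $f+\delta g$ is even on every face because $\delta g$ is a coboundary, and is not itself a coboundary because $f$ is not, so it is bad and its support has size at least $|E(G)|$, forcing $|C\setminus E(G)|\ge|C\cap E(G)|$ and hence $|E(G)\cap C|\le\tfrac12|C|\le\tfrac12|A|\,|B|$. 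Your remark that the possibly incomplete shadow graph in the $\cH_p$ model only helps is right, since $E(G)$ is contained in the shadow graph and only the upper bound $|C|\le|A|\,|B|$ is used. Finally, the interchange of summation together with the degree bound $d_w\le n/2$ is routine. The switching argument is in the same spirit as the original proof in~\cite{LinialMeshulam06}; your bookkeeping is simply tighter, which accounts for the improved constant.
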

For convenience we define $c:=\frac{1}{120}$.
With the help of this proposition we can prove the range when $k$ is large.
In this case the error probabilities are small enough that we may rule out
a bad function not just for \emph{some} $p$, but for \emph{all} $p$ large enough.

\begin{lem}\label{lem:largesupport}
  Whp for all $p\ge (1+o(1)) \frac{\log n}{n}$ there is no smallest bad
  0-1 function on the edges of $\cH_p$ with support of
  size $k\ge \log n$.
\end{lem}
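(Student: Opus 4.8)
The goal is to rule out, whp and uniformly in $p \ge (1+o(1))\frac{\log n}{n}$, the existence of a smallest bad $0$-$1$ function whose support $F$ has size $k \ge \log n$. By Lemma~\ref{lem:minimalbad} such an $F$ is super-connected and has maximum degree below $n/2$, and by Proposition~\ref{prop:badtriples} it forces at least $ck n$ bad triples to be non-faces. The plan is a first-moment / union-bound argument, but where the naive bound $\binom{n}{k+1}\binom{\binom{k+1}{2}}{k}$ on the number of candidate supports is replaced by a much sharper count obtained from a breadth-first search (BFS) exploration of the auxiliary graph $G$ described after Definition~\ref{def:superconnected} (vertices = edges of $\cH$, adjacency = sharing a face). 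Since $F$ is super-connected, it is connected in $G$, so we can build it by a BFS that at each step either reveals a new face of $\cH$ (contributing one or two new edges to $F$) or closes up an already-discovered pair. The key point is that each BFS step costs only a bounded number of vertex-choices and one factor of $p$ for the revealed face, rather than the wasteful $\binom{n}{k+1}$ all at once; this keeps the counting factor down to roughly $(Cn)^{O(k)}$ with a controllable constant, while the $(1-p)^{ckn}$ factor from the forbidden bad triples contributes $e^{-(1+o(1))ck\log n} = n^{-(1+o(1))ck}$. Balancing these should give a total probability $\sum_{k \ge \log n}(\text{poly})\cdot n^{-\Omega(k)}$, which is $o(1)$.

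**Key steps, in order.** First I would set up the BFS on $G$ precisely: fix a root edge of $F$, and process edges of $F$ in BFS order; when we process an edge $e$, we reveal all faces of $\cH$ containing $e$ and another edge already known to be in $F$ or about to be added — but to keep the count clean it suffices to reveal, for each newly-added edge, one ``parent'' face witnessing its attachment. This builds $F$ using exactly $k-1$ faces (a spanning tree of the connected subgraph of $G$ induced by $F$), each face chosen by specifying a constant amount of new information: the parent edge is already known, so the face is determined by at most one new vertex, costing a factor $\le n$ and a probability factor $p$ per face, plus occasionally a second new edge of $F$ also costing $\le n$. Second, I would bound the number of forbidden non-faces: every bad triple must fail to be a face, and by Proposition~\ref{prop:badtriples} there are at least $ckn$ of them, giving $(1-p)^{ckn}$; I must be slightly careful that the $O(k^2)$ triples I already "used" as revealed faces (which are present) are not double-counted among the bad triples (which are absent) — but $ckn - O(k^2) = (1-o(1))ckn$ for $k \le $ linear, and for larger $k$ the degree bound plus Proposition~\ref{prop:badtriples} still gives linearly-many-in-$n$ forbidden triples per unit of $|E(G)|$, so the exponent stays $\Theta(kn)$. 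Third, assemble: the probability of a bad support of size $k$ existing is at most $n \cdot (Cn)^{O(k)} \cdot p^{k-1} \cdot (1-p)^{(1-o(1))ckn} \le (Cn^{O(1)})^k \cdot n^{-(1-o(1))ck}$, and since $c = 1/120$ is a fixed constant, for $k \ge \log n$ the dominant factor is $n^{-\Omega(k)} \le n^{-\Omega(\log n)}$, so even after summing over $k$ and over the discrete set of birth times $p$ (of which there are polynomially many), the total is $o(1)$. This yields the "for all $p$" conclusion.

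**Main obstacle.** The delicate point is making the BFS count genuinely $(Cn)^{O(k)}$ rather than, say, $n^{\Theta(k)}$ with a large hidden exponent, or worse, accidentally $\binom{n}{k}$-sized. One has to be careful that (a) each BFS step really is specifiable with $O(1)$ new vertices — faces have only three vertices, two of which (an edge already in $F$) are already determined, so this is fine, but edges of $F$ that attach via a face \emph{both} of whose non-parent vertices are new need a little thought; (b) the number of faces used is exactly $k-1$ and not more, which is where super-connectedness (rather than mere connectedness of the shadow graph) is essential; and (c) the $O(k^2)$ correction to the bad-triple count is genuinely lower-order, which forces us to restrict attention to $k$ not too close to $n^2$ — but this is not a real problem since for $k$ larger than linear one uses the degree bound and Proposition~\ref{prop:badtriples} together to keep at least $\Omega(kn)$ forbidden triples that are disjoint from the revealed faces. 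Getting the bookkeeping of the BFS exactly right so that the exponent of $n$ in the counting factor is strictly smaller than $ck$ is the crux; everything else is routine estimation.
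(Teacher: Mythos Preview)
Your overall approach --- a BFS exploration of the super-connected support to obtain a sharp enumeration, combined with Proposition~\ref{prop:badtriples} for the forbidden bad triples, then summing over $k$ and taking a union bound over birth times --- is exactly the paper's approach. However, your identification of the ``crux'' is mistaken in a way that would derail the calculation if followed literally. You write that the goal is to make the exponent of $n$ in the BFS counting factor strictly smaller than $ck = k/120$; this is impossible, since the BFS count is genuinely of order $n^2(Cn)^{k-1}$, with $n$-exponent about $k+1$. What actually makes the argument work is the cancellation you yourself noted earlier (``a factor $\le n$ and a probability factor $p$ per face''): the $k-1$ witnessing faces cost $(Cn)^{k-1}$ choices but also contribute probability $p^{k-1}$, and since $np = (1+o(1))\log n$, the net contribution is $(C\log n)^{k-1}$, not $(Cn^{O(1)})^{k}$. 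Only after this cancellation does the factor $n^{-ck}$ from the non-faces dominate, and only for $k \ge \log n$ (or at least $k$ larger than some constant depending on $c$) does $(C\log n)^k$ lose to $n^{-ck}$. Your assembly line $n\cdot(Cn)^{O(k)} p^{k-1} (1-p)^{(1-o(1))ckn} \le (Cn^{O(1)})^k n^{-(1-o(1))ck}$ discards precisely this cancellation and therefore does not yield a useful bound; the exponent you need to control is that of $np$, not of $n$.

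Two smaller points. Your worry about an $O(k^2)$ overlap between the $k-1$ witnessing faces and the bad triples is unnecessary: by parity each witnessing face contains exactly two support edges (parent and child) and is therefore an \emph{even} triple, automatically disjoint from the odd triples forbidden by Proposition~\ref{prop:badtriples}. For the same reason the case ``a face reveals a second new edge of $F$'' cannot occur, since a face with all three boundary edges in the support would be odd and hence absent. The paper records the BFS by the out-degree sequence $(b_i)_{i\le k}$ with $\sum b_i = k-1$ and then sums over this sequence via a somewhat heavy case analysis; your spanning-tree description is equivalent, and the sum can in fact be handled directly by the multinomial identity $\sum_{(b_i):\sum b_i = k-1}\prod_i 1/b_i! = k^{k-1}/(k-1)! \le (2e)^{k}$, which is harmless next to $(C\log n)^k$.
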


\begin{proof}
Recall that the support of a smallest bad 0-1 function must be super-connected (Lemma~\ref{lem:minimalbad}\ref{minimalbad:superconnected}),
and therefore we can discover it from an edge via a breadth-first search.
More precisely, start from any edge of the support and query all triples
containing it. Any triple which forms a face must have exactly one further
edge of the support contained in it (otherwise it would be odd). From all
further support edges found in this way, according to some arbitrary but
pre-determined order, we continue the process (though querying only triples
containing edges not yet known to be in the support). By the
super-connectivity, we must find the whole support in this way.

Let us bound the number of components of size $k$ which can be found by this process. From each edge we may query up to $n$ triples, and
suppose that from the $i$-th edge we find $b_i$ faces in the BFS. The number of possible ways this can occur
is at most $\binom{n}{b_i} 2^{b_i}$ (we choose which $b_i$ faces are present, and for each face exactly
one of the two further edges within it must be in the support). Thus conditioned on the sequence $(b_i)$, the
total possible number of supports of size $k$ is at most
$$
\binom{n}{2}\prod_{i=1}^{k}\binom{n}{b_i} 2^{b_i} \le \binom{n}{2}\frac{(2n)^{k-1}}{\prod_{i=1}^k b_i!}
$$
where the inequality holds because the $b_i$ must sum to $k-1$. Furthermore, by Proposition~\ref{prop:badtriples} the probability that one fixed support exists and has no odd face is at most
$$
p^{k-1}(1-p)^{ckn} \le \left(p(1-p)^{cn}\right)^{k-1}.
$$
By differentiating the expression $p(1-p)^{cn}$, we can see that
this expression has its maximum at $p=\frac{1}{1+cn} \ll \frac{\log n}{n}$.
Therefore in the following calculations, we may substitute in the lower bound
of $(1+o(1))\frac{\log n}{n}$ for $p$.
Thus the probability $p^*$ that some such support exists and has no odd face satisfies
\begin{align*}
p^*\prod_{i=1}^k b_i! & \le \binom{n}{2}\left(2np(1-p)^{cn}\right)^{k-1}\\
& \le \binom{n}{2}\left( 3(\log n) e^{-(c+o(1))\log n}\right)^{k-1}\\
& \le \left(3(\log n) n^{-2c/3+2/(k-1)} \right)^{k-1}\\
& \le n^{-ck/2}.
\end{align*}
Here for the last inequality we used the fact that $k\ge \log n$.

However, we still need to sum over all possible sequences $b_i$. We now make a case distinction based on the number of $b_i$ which are large. Let
$$
\ell := |\{i:b_i\ge n^{c/4}\}|.
$$
For fixed $\ell$, we very crudely bound the number of possible sequences $b_i$ by
$$
k^\ell n^\ell (n^{c/4})^{k-\ell}.
$$
(We choose which $\ell$ positions have $b_i\ge n^{c/4}$, for each of these we choose a $b_i$ at most $n$, for all others we choose a $b_i$ at most $n^{c/4}$.)

On the other hand, we have
$$
\prod_{i=1}^k b_i! \ge ((n^{c/4})!)^\ell \ge (n^{c/4})^{\ell n^{c/5}} \ge n^{\ell n^{c/6}}.
$$
Thus
\begin{align*}
\sum_{b_1,\ldots,b_k}\frac{1}{\prod_{i=1}^k b_i!} & \le \sum_{\ell=0}^k \frac{k^\ell n^\ell (n^{c/4})^{k-\ell}}{n^{\ell n^{c/6}}}\\
& = n^{ck/4} \sum_{\ell=0}^k \left(\frac{kn}{n^{c/4}\cdot n^{n^{c/6}}}\right)^\ell \\
& \le n^{ck/4} \cdot (k+1).
\end{align*}

Combining this with our previous bounds, the probability that there exists a bad support of size $k$ is at most
$$
n^{-ck/2}n^{ck/4} \cdot (k+1) \le n^{-ck/5}.
$$
Summing over all $k\ge \log n$, the probability that any such bad support exists is
at most $n^{-\frac{c}{6}\log n}$.
This bound is valid for any single $p\ge (1+o(1))\frac{\log n}{n}$,
and taking a union bound over all $O(n^3)$ birth times in this range,
we conclude that the probability that any bad minimal support of size
at least $\log n$ ever appears in this range is at most
$$
n^{-\frac{c}{7}\log n} =o(1)
$$
as required.
\end{proof}

We note here that there is nothing particularly special about
the bound $k\ge\log n$. In fact, an identical argument
works for $k$ larger than some absolute constant which is related to $c$.

Together with the fact that there are no bad functions with support
of size one, Corollary~\ref{cor:lastobstruction} and
Lemmas~\ref{lem:smallsupport} and~\ref{lem:largesupport}  show
that at time $p=\frac{\log n + \frac12 \log \log n + \omega}{n}$, whp $\cH_p$
is \connected. However, we still need to prove that whp it does not become
disconnected again. We aim to do this by showing that a new obstruction
cannot appear suddenly at a large size -- rather, we must first see a
copy of $M$, which by Corollary~\ref{cor:lastobstruction} we already
know does not happen whp. 

For supports of size at least $\log n$, this is already implied by
Lemma~\ref{lem:largesupport}. We proved this since the error probability
was small enough that we could take a union bound over all remaining birth times,

Indeed, a similar argument would work for supports of size at least $6$,
but this calculation does not work
for $k\le 5$. Since for small supports we have to be more careful in any case,
we use this argument for all sizes up to $\log n$.

For a $2$-complex $\cH$ and a triple of vertices $T$, let $\cH+T$ denote the complex
obtained from $\cH$ by adding $T$ as a face and (if necessary) all pairs in $T$ as
edges.

\begin{lem}\label{lem:throughM}
Suppose that in a $2$-complex $\cH$ each pair of vertices
is connected by at least $k$ paths of length $2$ in the shadow graph.
Suppose further that $\cH$ is \connected, but for some triple $T$, $\cH+T$ contains
a bad 0-1 function with support of size at most $k$.
Then $M \subset \cH+T$.
\end{lem}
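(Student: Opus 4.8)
The plan is to analyze what happens when we add the single face $T$ to the \connected\ complex $\cH$ and suddenly create a bad function. Let $F$ be a smallest bad support in $\cH+T$, so $|F|\le k$, and recall from Lemma~\ref{lem:minimalbad} that $F$ is super-connected and has maximum degree less than $n/2$. First I would observe that $T$ must actually be used: since $\cH$ itself is \connected, any edge set in $\cH$ (without $T$) that is even on every face of $\cH$ is induced by a vertex function, so the same set cannot be bad in $\cH+T$ unless $T$'s boundary creates a new parity constraint. Hence at least one of the three edges of $T$, and in fact an odd number of them (one or three), must lie in $F$; otherwise $F$ would already be a bad support in $\cH$, contradicting \connectivity.

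Next I would pin down the structure around $T=\{a,b,c\}$. Write $F_0$ for the edges of $F$ other than those inside $T$. The key point is that $F_0$ must be even on the boundary of every face of $\cH$ (the faces of $\cH+T$ other than $T$ are exactly the faces of $\cH$, and on each of these $F$ is even while the edges of $T$ contribute nothing), so $F_0$ is a \emph{non-bad} support in the \connected\ complex $\cH$, i.e.\ $F_0$ is induced by some vertex $2$-colouring $g\colon V\to\FF_2$ of the shadow graph of $\cH$. Combining this with the previous paragraph: the edges of $T$ that lie in $F$ are precisely those whose endpoints are \emph{monochromatic} under $g$ (the others are automatically accounted for by $g$), and there is an odd number of them. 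A short case check on $g(a),g(b),g(c)$ shows this forces exactly two of the three pairs in $T$ to be monochromatic and the third bichromatic; relabel so that $ab$ and $ac$ are monochromatic (hence in $F$) and $bc$ is bichromatic. In particular $g(b)=g(c)=g(a)$, so actually all three are monochromatic — which would give three edges of $T$ in $F$, again odd; so in fact the clean statement is: up to relabelling $ab,ac\in F$ and whichever further structure we need. (I would write this dichotomy out carefully; the ``exactly one or exactly three'' split is the combinatorial heart.)

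Now I would produce the copy of $M$ in $\cH+T$. The face is $T=abc$ itself. I must check (i) the edges $ab$ and $ac$ lie in no face of $\cH+T$ other than $T$, and (ii) there is an $a$--$b$ path in the shadow graph avoiding $ab$ and $ac$. For (i): if $ab$ lay in another face $\sigma$ of $\cH$, then $\sigma\in\cH$ and $F$ being even on $\partial\sigma$ together with $ab\in F$ forces another edge of $F$ in $\sigma$; I would argue that super-connectedness plus minimality of $F$, combined with the fact that $F_0$ is $g$-induced and $\sigma$'s boundary is then forced to be $g$-even, yields a contradiction — essentially $\sigma$ could be used to shrink or re-partition $F$. (This is where I expect to lean hardest on Lemma~\ref{lem:minimalbad}; if $ab$ or $ac$ sits in a second face, one can toggle along that face to get a smaller or disconnected bad support.) For (ii): since $g(a)\ne g(b)$ is \emph{false} here — wait, we have $g(a)=g(b)$ — so $ab\in F$ means $F$ restricted to any $a$--$b$ path must be odd; and by hypothesis there are at least $k\ge|F|$ internally-disjoint paths of length $2$ from $a$ to $b$ in the shadow graph of $\cH$, so one of them, say $a z b$, avoids all of $F$ entirely (in particular avoids $ac$, which is incident to $a$ but we can also ensure $z\ne c$ by the count) — but then this path has $g$-monochromatic-sum issues... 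Actually the path $azb$ being $F$-free and $ab\in F$ shows $azb+ab$ is a cycle meeting $F$ oddly, confirming $F$ is bad \emph{and} giving exactly the path $P_{ab}$ needed for $M$. So the path for $M$ is furnished directly by the ``$k$ paths of length $2$'' hypothesis.

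The main obstacle I anticipate is step (i) — ruling out that $ab$ or $ac$ lies in a second face — since this is precisely the non-monotone subtlety of $M$ and is not forced merely by $F_0$ being $g$-induced; it should follow by a toggling/minimality argument using super-connectedness, but it needs care to state correctly (and may require first choosing $F$ to be smallest among bad supports of $\cH+T$, and possibly choosing $T$'s labelling so that $ab,ac$ are the two edges of $T$ in $F$). A secondary nuisance is bookkeeping the parity case analysis on $g(a),g(b),g(c)$ cleanly so that ``an odd number of edges of $T$ in $F$'' translates into ``a vertex of $T$ whose two incident $T$-edges are in $F$ and the opposite $T$-edge is not'', which is exactly the vertex $a$ of the copy of $M$.
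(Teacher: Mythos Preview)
Your approach has a genuine gap at the very step you flag as ``the combinatorial heart''. You write that $F_0:=F\setminus\{ab,ac,bc\}$ is even on the boundary of every face of $\cH$, because ``the edges of $T$ contribute nothing''. But this is false in exactly the situation you later worry about: if an edge of $T$ that lies in $F$ (say $ab$) is already an edge of $\cH$ and sits in some face $\sigma=abd$ of $\cH$, then $|F\cap\partial\sigma|$ is even while $|F_0\cap\partial\sigma|=|F\cap\partial\sigma|-1$ is odd. So $F_0$ need not be a cocycle in $\cH$, and your conclusion that $F_0$ is induced by a vertex colouring $g$ does not follow. (Even your phrasing ``$F_0$ is a non-bad support \dots\ i.e.\ $F_0$ is induced by $g$'' conflates two things: non-bad means \emph{either} odd on some face \emph{or} a coboundary.) This is not a cosmetic issue; the whole case analysis on $g(a),g(b),g(c)$ and the subsequent argument for (i) rest on the existence of $g$. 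Your own visible confusion in the paragraph (``so actually all three are monochromatic --- which would give three edges of $T$ in $F$\dots'') is a symptom of this: the parity bookkeeping cannot close because the premise is wrong.

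The paper proceeds differently and avoids this trap. Rather than stripping off the $T$-edges and hoping the remainder is a cocycle in $\cH$, it supposes $F$ contains some edge $e$ \emph{not} among the three pairs of $T$, takes the maximal subset $S'\subseteq F$ containing $e$ that is super-connected \emph{in $\cH$}, and shows the function with support $S'$ is bad in $\cH$: maximality forces every face of $\cH$ meeting $S'$ to miss $F\setminus S'$, so $S'$ is even on all faces of $\cH$; and since $|S'\setminus\{e\}|\le k-1$, one of the $k$ length-$2$ paths between the endpoints of $e$ avoids $S'\setminus\{e\}$, giving an odd cycle. This contradicts the \connectivity\ of $\cH$, so $F$ is contained in the three edges of $T$; evenness on the face $T$ then forces $|F|=2$, and now step (i) is immediate (any other face through $ab$ would be odd) and step (ii) follows from the $k$-paths hypothesis exactly as you describe. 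The moral: prove $F\subseteq T$ first, and (i) comes for free.
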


\begin{proof}
Let $f$ be a bad 0-1 function in $\cH+T$ with minimal support.

Suppose first that the support of $f$ contains an edge $e$ outside of $T$.
Let $S$ be the support of $f$ and let $S'$ be a maximal subset of $S$
which contains $e$ and is super-connected in $\cH$. Let $f'$ be a new
0-1 function whose support is exactly $S'$.

Now by the maximality of $S$, any face of $\cH$ meeting $S'$ cannot meet $S\setminus S'$, and
since such a face was even with respect to $f$, it is also
even with respect to $f'$. On the other hand, since $|S'\setminus\{e\}|\le k-1<k$,
the edge $e\in S'$ is contained within a triangle of $\cH$ which contains
no further edges of $S'$, and thus forms an odd cycle with respect to $f'$. But this means
that $\cH$ is not \connected, which is a contradiction.

Thus the support of $f$ is contained within $T$. But then
(by the fact that $T$ must be an even face)
the support consists of exactly $2$ edges and $T$ forms a copy of $M$ in $\cH+T$.
\end{proof}

We can now complete the proof of the supercritical case. For we know
by Lemma~\ref{lem:largesupport} that whp the smallest obstruction is
of size smaller than $\log n$ for any $p\ge \frac{\log n}{n}$.
Furthermore, we know that at time $p=p_M > \frac{\log n}{n}$, $\cH_p$
is \connected\ whp by Lemma~\ref{lem:smallsupport}. Finally, by Lemma~\ref{lem:denseshadow} $\cH_p$ satisfies
the shadow graph condition of Lemma~\ref{lem:throughM}.

Let us condition on these high probability events all occurring.
Now suppose for some $p\ge p_M$, $\cH_p$ is not \connected, and let $p$ be
minimal such that this is the case. Then $p$ is the birth-time of
a face $T$, and $\cH$, the complex just before this face was born,
satisfies all the conditions of Lemma~\ref{lem:throughM}. But then
$\cH_p=\cH+T$ contains a copy of $M$, contradicting the fact that $p\ge p_M$.

\section{Concluding remarks}\label{sec:conclusion}

\subsection{Search processes for hypergraphs}

In Section~\ref{sec:supercritical} we used a search process to allow
us to better count the possible number of super-connected supports.
Such search processes in hypergraphs have been used previously, for
example in~\cite{CooleyKangPerson14} and~\cite{LuPeng14} to determine
the threshold for high-order phase transitions in hypergraphs,
inspired by previous work for graphs in~\cite{KrivelevichSudakov13}.

\subsection{Alternative models}

There are several possible ways of generating a random $2$-complex.
If we start from a random binomial $3$-uniform hypergraph, we must
add some edges to ensure that we have a complex. The model of
Linial and Meshulam and the model we consider in this paper lie at
the two extremes---either adding in all possible edges, or only
adding those edges we really have to. One might also consider what
happens in between, if only \emph{some} of the (not strictly
necessary) edges are added, possibly randomly. However, as far as the
\connectivity\ of the resulting complex is concerned, this is
essentially covered
by the results of~\cite{LinialMeshulam06} and this paper. Indeed, if any
edge is not contained in a face, the complex is not \connected;
otherwise we have the model we considered here.

It is also possible to construct a $2$-complex from a graph rather
than from a $3$-uniform hypergraph by
taking all triangles of the graph as faces. This is a special
case of the \emph{clique complex} which has been studied
for example in~\cite{Meshulam01} and~\cite{Kahle09}.

\subsection{Higher dimension}

A natural question would be to ask whether the results in this paper extend to
higher-dimensional complexes. For a $k$-complex generated from a
$(k+1)$-uniform hypergraph, we could ask whether the $0$-th, $1$st, \ldots, $(k-1)$-th
homology groups all vanish.

For the analogue of the $\cHLM_p$ model of Linial and Meshulam the obvious conjecture
is that the $(k-1)$-th homology group should vanish once every set
of $k$ vertices is contained in a $k$-simplex, which property has a
threshold function of $p=\frac{k\log n}{n}$. This was indeed confirmed
to be the threshold in~\cite{MeshulamWallach08}. The behaviour within
the critical window was subsequently examined in~\cite{KahlePittel16}.

In the analogue of our $\cH_p$ model, we would have to consider the vanishing of
the $j$-th homology group separately for $j=0,\dotsc,k-1$. It is not too
hard to see that the vanishing of the zero-th homology group in the $k$-dimensional model has threshold
$p=\frac{\log n}{\binom{n}{k}}$. For general $j$, we expect the threshold
for the vanishing of the $j$-th homology group to be of order $\frac{\log n}{n^{k-j}}$.

\subsection{Alternative definitions of connectivity}

If $X$ is a simplicial complex, then the vanishing of $H_1(X;\FF_2)$
is just one way of defining ``one-dimensional connectivity''. A stronger
notion would be to ask for the homology group $H_1(X;\ZZ)$ to vanish.
Similar notions of connectivity---for arbitrary dimension and coefficients
in an arbitrary finite group---have been considered in~\cite{MeshulamWallach08}.
The strongest notion of one-dimensional connectivity is to consider
\emph{simple connectivity}, i.e.\ the vanishing of the fundamental group.
For the $\cHLM_p$ model, this was studied in~\cite{BabsonHoffmanKahle11}.
Another possibility is to consider Betti numbers~\cite{Kozlov10}.

For hypergraphs rather than complexes, vertex-connectivity is
by far the most studied definition, and the connectivity threshold
of $\frac{\log n}{\binom{n}{k}}$ in a $(k+1)$-uniform hypergraph
and a corresponding hitting time result
can easily be proved analogously to the graph case. (For $k=2$
this is proved in this paper, albeit not with the sharpest possible threshold,
in Lemmas~\ref{lem:isolated} and~\ref{lem:trivialtopological}).

More generally, for $1\le j \le k$ and a $(k+1)$-uniform hypergraph we may define
a higher-order notion of connectivity on the $j$-sets (the case $j=1$ corresponds
to vertex connectivity). Then the threshold for connectivity is
$\frac{j\log n}{\binom{n}{k+1-j}}$, as proved in~\cite{CooleyKangKoch15b}.

\ 

\bibliographystyle{amsplain}
\bibliography{../References.bib}

\

\end{document}